\newtheorem{prop}{Proposition}
\newtheorem{lem}{Lemma}
\newtheorem{rem}{Remark}
\newcommand{\RR}{{\mathbb R}}
\newcommand{\norm}[1]{\lVert#1\rVert}
\newcommand{\tr}{{\mathrm{Tr}}}
\newcommand{\Sy}{\mathrm{S^+(p,d)}}
\newcommand{\Si}{\mathrm{S_{\mathrm{diag}}^+(p,d)}}
\newcommand{\Sii}{\mathrm{S_{\mathrm{isot}}^+(p,d)}}
\newcommand{\St}{\mathrm{V(p,d)}}
\newcommand{\dotex}{{\frac{d}{dt}}}
\begin{document}

\title{Low-rank plus diagonal approximations for Riccati-like matrix differential equations}

\author{%
  Silvère Bonnabel \\
  MINES ParisTech, PSL University, Center for robotics\\
  \texttt{silvere.bonnabel@mines-paristech.fr}  
  \and
  Marc Lambert  \\
    INRIA - Ecole Normale Sup\'erieure - PSL Research university  \\
  DGA/CATOD, Centre d'Analyse Technico-Op\'erationelle de D\'efense\\
  \texttt{marc.lambert@inria.fr} 
  \and
  Francis Bach \\
  INRIA - Ecole Normale Sup\'erieure - PSL Research university  \\
  \texttt{francis.bach@inria.fr} 
}

%\date{\today}
 \date{}
 
 \maketitle

% REQUIRED
\begin{abstract}
We consider the problem of computing tractable approximations of time-dependent $d\times d$ large positive semi-definite (PSD) matrices    
defined as solutions of a matrix differential equation.  We propose to use  ``low-rank plus diagonal" PSD matrices as approximations   that can be stored with a memory cost being linear in the high dimension $d$. To constrain the solution of the differential equation to remain in that subset, we project the derivative at all times  onto the tangent space to the subset,  following the methodology of dynamical low-rank approximation. We derive a closed-form formula for the projection, and show that after some manipulations it can be computed with a numerical cost being linear in $d$, allowing for tractable implementation.    Contrary to previous approaches based on pure low-rank approximations, the addition of the diagonal term allows for  our approximations to be invertible matrices, that can moreover be inverted   with linear cost in $d$. We apply the technique to Riccati-like equations, then to two particular problems. Firstly a low-rank approximation to our recent Wasserstein gradient flow  for  Gaussian approximation of posterior distributions in approximate Bayesian inference, and secondly a novel low-rank approximation of the Kalman filter  for high-dimensional systems. Numerical simulations illustrate the results.  
\end{abstract}

\section{Introduction}

 Positive semi-definite (PSD) matrices $X \in\RR^{d\times d}$    have   storage cost in   $d^2$, and the computation cost associated with typical matrix operations is in $d^3$. Those two aspects  may come as a limitation in various applications where the dimension $d$ is  very large.  A sensible approach is thus to work with low-rank approximations instead. A rank $p$ approximation
of a PSD matrix $X$ can be factored as $Y=URU^T$ where   $U \in \RR^{d \times p}$ has orthonormal columns and $R$ is a $p\times p$ positive definite matrix of   much-reduced size, letting $p \ll d$.  We denote by $\Sy$ the set of  rank-$p$ PSD matrices.  A somewhat richer set, that will be the object of this paper, is the set of ``low-rank plus diagonal'' matrices, that we denote by $\Si$, composed of matrices of the form $Y=URU^T+\psi$ with $\psi$ a diagonal matrix with strictly positive diagonal elements, making $Y$ invertible. We then  speak of  FA decomposition, in relation to the problem of factor analysis (FA) \cite{Harman}. In the   case where the diagonal matrix $\psi$ is made isotropic, that is, is taken of the form $\psi=s I_d$, $s>0$, we speak of PPCA decomposition, in relation to the problem of probabilistic principal component analysis (PPCA) \cite{Tipping}, and denote this smaller set by $\Sii$. 
{A well-known ``static" approximation problem consists in computing the closest approximating matrix $Y$ to some large PSD matrix $X$ in the sense of the Frobenius norm, that is, solving   
\begin{align}\min_{  Y\in \mathcal  M}||  Y-  X||,\label{Lubich_0:eq}\end{align}
where $\mathcal M$ may denote  $\Sy,\Si$, or $\Sii$. If  $\mathcal M=\Sy$, the problem is  solved by a singular value decomposition   where eigenvalues are truncated after the $p$-th one, a classical result of matrix analysis known as the  Eckart–Young–Mirsky theorem \cite{eckart1936approximation}, see also \cite{Higham}. If $\mathcal M=\Si$, we recover the problem known as ``minimum residual factor analysis", see \cite{shapiro1982rank,bertsimas2017certifiably}. The problem is not solvable in closed form, but numerical methods have been developped, see e.g. \cite{ciccone2019alternating}. }

In this paper, however,  we consider    a different problem. We consider positive semi-definite   matrices $X(t)\in\RR^{d\times d}$ defined as solutions of matrix differential equations in high dimension. Instead of  attacking \eqref{Lubich_0:eq} directly,   we   try to solve at all times 
\begin{align}\min_{  \dot Y(t)\in \mathcal T_{Y(t)}\mathcal  M}||  \dot Y(t)-  \dot X(t)||.\label{Lubich_22:eq}\end{align}{Technically, the problem differs from \eqref{Lubich_0:eq} as the derivatives live in a different space, namely the tangent spaces. The rationale is as follows.  To handle possibly high dimension $d$, we would like to maintain a storage cost being linear in $d$--which is achieved by letting  $Y(t)\in\mathcal M$ at all times--and a computation cost being also linear in $d$ when incrementally evolving our approximation $Y(t)$, hence the need to solve \eqref{Lubich_22:eq} efficiently.  The solution to  
\begin{align}
\dot X(t)=F(X(t))
\label{EDO:eq}\end{align} may then be approximated by letting at all times $\dot Y(t)$ be the solution to \eqref{Lubich_22:eq}, where $\dot X(t)=F(X(t))$  is replaced by $F(Y(t))$. Indeed, even if $Y(t)\in\mathcal M$,  the matrix $F(Y(t))$ generally points to a direction that makes $Y(t)$ step out of $\mathcal M$, hence the need to project it onto the tangent space $\mathcal T_{Y(t)}\mathcal  M$. 
 Although   $\mathcal T_{Y(t)}\mathcal  M$ is a vector space for each $Y(t)$, solving \eqref{Lubich_22:eq} exactly while maintaining operations being linear in $d$ is not straightforward, and this is the object of the present paper. }

Let us first consider $\mathcal M=\Sy$. Low-rank approximations $Y(t)$ to $X(t)$  conveniently write  $Y(t)=U(t)R(t)U(t)^T$ with $U(t)\in\RR^{d\times p}$ a matrix whose columns are orthonormal and  $R(t)\in\mathrm{S^+(p,p)}$  a small-size positive definite (PD) symmetric matrix, as advocated in \cite{Lubich, Bonnabel, Rouchon}.
  \cite{Lubich, Rouchon}  address the following projection problem
\begin{align}\min_{\dot Y(t)\in \mathcal T_{Y(t)}\Sy}||\dot Y(t)-\dot X(t)||\label{Lubich:eq}\end{align}in the sense of the Frobenius norm, where $\mathcal T_{Y}\Sy$ denotes the tangent space to $\Sy$ at $Y\in\Sy$.  \cite{Lubich} derives expressions for $\dot U(t)$ and $\dot R(t)$ such that $\dot Y(t)$   corresponds to the solution of \eqref{Lubich:eq} indeed.

Historically, \cite{bonnabel2013geometry,bris2012low,Rouchon} proposed  approximations to the  continuous-time low-rank Riccati  equation using differential  geometry, which were in particular applied to approximating the Linblad equations in quantum physics. The work \cite{bris2012low,Rouchon} is in line with prior results of \cite{Lubich}, where essentially one seeks to orthogonally project the tangent vector in the sense of the Euclidean norm, whereas the approach of \cite{bonnabel2013geometry} builds on the geometry of \cite{Bonnabel}. {A judicious numerical time integrator was proposed in \cite{lubich2014projector} with recent extensions to matrices and tensors that preserve symmetry  in   \cite{ceruti2020time}. }

The present paper builds on  \cite{Lubich, bris2012low,Rouchon},  providing an invertible covariance matrix, by approximating derivatives on $\Si$ and $\Sii$ instead. This has  not been addressed to our knowledge, and we provide closed-form formulas for the projection of the tangent vector. Although the method is   general, implementing the projection depends on the underlying differential equation being approximated. Of particular interest is the Riccati differential equation, that appears in a number of applications, e.g., Kalman filtering, linear quadratic control, in stability analysis via the Lyapunov function, or in computational statistics to estimate covariance matrices, see \cite{lambert2022recursive}. The interests of the ``low-rank plus diagonal" framework {in the context of matrix dynamical approximation} are as follows: \begin{itemize}\item It allows for manipulation of \emph{invertible} matrices, whereas low-rank approximations are inherently non-invertible. Moreover, using Woodbury's lemma, inversion of such an approximation is feasible at all times,  see \eqref{woodbury:eq} below.
 \item  Approximation with a full rank matrix is desirable in estimation problems where  $Y$ encodes a covariance matrix. A rank-deficient covariance may lead to over-confident estimates.  In Kalman filtering, this may lead the  state variable estimates to spuriously drift, as shall be shown in simulations. \item It ensures more flexibility while retaining a  storage cost  linear in $d$. Notably, it can   capture the individual variances of the variables for a large  covariance matrix $Y$. 
 \item $Y$ admits a   probabilistic interpretation: $URU^T+sI_d$ is the covariance of $x=Uz+\nu$, with $z\sim\mathcal{N}(0,R)$ a small-dimensional latent variable  mapped via $U$  into a   subspace of $\RR^d$, plus $\nu$ an isotropic noise of magnitude $ s,$ \cite{Tipping,Harman}.
 \end{itemize}

Our contributions and the organization of the paper are as follows. Previous work on low-rank approximation {of matrix differential equations} \cite{Lubich, Rouchon} is recapped in Section \ref{sec:0}. Section \ref{sec:1} extends the results to the low-rank plus diagonal case: We solve  exactly in closed form for each $t$ the following extension of problem  \eqref{Lubich:eq}:\begin{align}\min_{\dot Y(t)\in \mathcal T_{Y(t)}\mathcal \Si }||\dot Y(t)-\dot X(t)||,\label{Lubich2:eq}  \end{align} for the set $\Si$ and also for $\Sii$. In Section  \ref{sec:2}, we discuss  implementation in high dimension and show the approximation can be obtained exactly with a computational cost being linear in $d$.  In Section  \ref{Riccati:sec}, we leverage the result to provide a tractable approximation to the Riccati equation. This is applied to two problems. First, we derive a low-rank approximation to our Wasserstein gradient flow of \cite{lambertNIPS}. Then, we propose a novel low-rank plus diagonal Kalman filter and illustrate the results on a tutorial example inspired by robotics.   The code is made publicly available.

 %Factor analysis (FA) \cite{Harman} is a standard technique of statistics to identify   latent factors explaining correlated observations. Its extension,  the probabilistic principal component analysis 
 %(PPCA) \cite{Tipping}, has been proposed to endow PCA in data analysis with a probabilistic model. This approach has been extended in the context of Bayesian online learning in \cite{Lambert23}. 
 As concerns low-rank techniques for Kalman filtering, we note the problem  has arisen in data assimilation for weather forecasting and oceanography \cite{Evensen94}, \cite{Pham}, where the state is driven by a partial differential equation, and is thus encoded by a high dimensional vector (say, up to 1 million). Since the Kalman filter needs to store the covariance matrix of the estimates, it meets the computer's memory limits when the dimension is high.  Different variants have been proposed to tackle this memory problem. The SEEK filter \cite{Pham} is based on an SVD decomposition of the covariance matrix and provides a low-rank Riccati update in discrete time. The ensemble Kalman filter \cite{Evensen94,le2022low} approximates the Riccati equation using  Monte Carlo  sampling.  {Very recently, and closely related to the present paper,  a method for efficient, approximate Gaussian (Kalman) filtering, including
smoothing and marginal likelihood computation was developed using low-rank approximations on $\Sy$, see \cite{schmidt2023rank}. } In robotics, \cite{Thrun}  proposed the sparse information filter (SEIF)  filter to process large maps for simultaneous localization and mapping.

\section{Reminders on low-rank  approximation}\label{sec:0}

We start with the geometry of $\Sy$, and then recap existing results on dynamical low-rank approximation. 
\subsection{Geometry of $\Sy$}
 Any matrix $Y\in\Sy$ may be written as $Y=URU^T$ %, with $U\in \St$, 
 where $R\in \mathrm {S^+(p,p)}$ is a small size positive definite matrix, and  $U\in \St$ where $\St$ denotes the set of $d\times p$ matrices with $p$ orthonormal columns, hence satisfying $U^TU=I_p$, called  the Stiefel manifold. Letting  $\Pi_U=UU^T$ be the projector onto the span of $U$ and $\Pi_U^\perp=I_d-UU^T$ the projector onto the orthogonal subspace,  we have $
 (\Pi_U)^2=\Pi_U,~(\Pi_U^\perp)^2=\Pi_U^\perp,~\Pi_U^\perp U=0,~U^T\Pi_U^\perp=0.
 $

 The quotient geometry of $\Sy$ is thoroughly studied in \cite{Bonnabel}.  Any tangent vector to $ \Sy$ may be represented by the infinitesimal variation $(\delta U,\delta R)    \in \mathcal T_{(U,R)}\Sy$  with $(\delta U,\delta R)  \in \RR^{d\times p}\times \RR^{p\times p}$ of the following form
\begin{align}
 \delta U=(I_d-UU^T)\Gamma,~\Gamma\in\RR^{d\times p};~ \delta R \in\RR^{p\times p},~\delta R^T=\delta R.\label{tan:form:eq}
\end{align}
This may be interpreted as follows: An infinitesimal variation of $U$ makes  its  columns move in the subspace of $\RR^d$ orthogonal to $\mathrm{span}(U)$, {as we have $(\delta U)^\top U = 0$, ensuring that $\delta (U^TU)=0$, in accordnace with the constraint $U^TU=I_p$}. And an infinitesimal variation of $R$  necessarily remains symmetric~\cite{Bonnabel}. The corresponding tangent vector $\delta Y=\mathcal T_Y\Sy$  at $Y=URU^T$ writes
 \begin{align}
 \delta  Y=(\delta U )RU^T+U (\delta R) U^T+UR(\delta U)^T.\label{tangentLR}
 \end{align}

\subsection{Optimal approximations on the tangent space}

 Any tangent vector to the set of full $d\times d$ PSD matrices at $X$ is encoded by a matrix $H\in\RR^{d\times d}$, with $H$ symmetric, see, e.g., \cite{Bonnabel}. Hence, given matrices $X(t)$ depending smoothly on the parameter $t\in\RR$, one may write at each time $\dot X(t)=H$, {and turn the problem   \eqref{Lubich_22:eq}, which specifies to \eqref{Lubich:eq} 
 in the present case, into the generic problem  of solving 
 }\begin{align}\min_{\dot Y(t)\in \mathcal T_{Y(t)}\mathcal \Sy }||\dot Y(t)-H||,\label{Lubich3:eq}  \end{align} with $H$ symmetric and arbitrary. Replacing $H$ with $F(Y(t))$ at each time $t$, and completing it with an initial condition $Y(t_0)$ will yield  an approximation $ \big(Y(t)\big)_{t\geq t_0}$ to the solution of the ODE $\dot X=F(X)$.   
 We have the following:  \begin{prop}[from \cite{Lubich,Rouchon}]\label{Lubich:prop}
 The orthogonal projection  of a symmetric matrix $H$ onto $\mathcal T_Y \Sy$ at  $Y=URU^T$ is,  in the retained form of tangent vectors \eqref{tan:form:eq},
$$
 \delta  Y=P_{U,R,s}(H)= 
\delta U RU^T+U \delta RU^T+UR\delta U^T,
$$
where the matrices are given by:
\begin{align}
%\Delta(H,U,R) &=(I-UU^T)  HU R^{-1},\label{Delta}\\
\delta U &=(I-UU^T)  HU R^{-1},\label{Delta}\\
%D (H,U)&=U^T HU\label{D}.
\delta R &=U^T HU\label{D}.
\end{align}
The tangent vector then writes$$
P_{U,R,s}(H)=HUU^T +UU^TH-UU^THUU^T.
$$
This choice   solves   problem \eqref{Lubich3:eq}, that is, it minimizes over matrices of the form $ \delta  Y= 
\delta U RU^T+U \delta RU^T+UR\delta U^T$ with constraints \eqref{tan:form:eq} the following cost
\begin{align}J_{H;U,R}( \delta U, \delta R)=\tr((  H-\delta Y)^2)=||  H-\delta Y||^2,\label{cost}
\end{align}whose minimum is given by $\min J_{H;U,R}( \delta U, \delta R)=\tr((I-UU^T)H(I-UU^T)H)$, and where we used { the trace form of Frobenius norm $||A||^2=\tr(A^TA)=\tr(A^2)$ for symmetric matrices}.
 \end{prop}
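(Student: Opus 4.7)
The plan is to exploit an orthogonal decomposition of the ambient space of symmetric $d\times d$ matrices induced by the complementary projectors $\Pi_U=UU^T$ and $\Pi_U^\perp=I_d-UU^T$. Any symmetric matrix $M$ splits as
\begin{align*}
M=\Pi_U M\Pi_U+\Pi_U M\Pi_U^\perp+\Pi_U^\perp M\Pi_U+\Pi_U^\perp M\Pi_U^\perp,
\end{align*}
and because $\Pi_U\Pi_U^\perp=0$, the four summands are pairwise orthogonal in the Frobenius inner product (immediate from cyclicity of the trace). The strategy is therefore to decompose both $H$ and $\delta Y$ in this way, so that $\norm{H-\delta Y}^2$ breaks up into a sum of four independent block contributions that can each be minimized on their own.

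Next, I would identify which blocks the admissible tangent vector $\delta Y=(\delta U)RU^T+U(\delta R)U^T+UR(\delta U)^T$ can reach. Using the constraint $U^T\delta U=0$, a one-line computation shows that $U(\delta R)U^T$ lies in the $\Pi_U\cdot\Pi_U$ block, $(\delta U)RU^T$ in the $\Pi_U^\perp\cdot\Pi_U$ block, and $UR(\delta U)^T$ in the symmetric $\Pi_U\cdot\Pi_U^\perp$ block, while $\delta Y$ has no $\Pi_U^\perp\cdot\Pi_U^\perp$ component. Matching blocks then dictates $U(\delta R)U^T=\Pi_U H\Pi_U$, which yields $\delta R=U^T H U$ (automatically symmetric since $H$ is), and $(\delta U)RU^T=\Pi_U^\perp H\Pi_U$, which yields $\delta U=(I_d-UU^T)HUR^{-1}$, well-defined since $R$ is positive definite and automatically satisfying $U^T\delta U=0$. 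Symmetry of $H$ makes the $\Pi_U\cdot\Pi_U^\perp$ piece match at no extra cost.

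With the three reachable blocks of $H$ matched exactly, the minimum cost equals the squared Frobenius norm of the single unreachable block:
\begin{align*}
\min J=\norm{\Pi_U^\perp H\Pi_U^\perp}^2=\tr\bigl((I_d-UU^T)H(I_d-UU^T)H\bigr),
\end{align*}
using $(\Pi_U^\perp)^2=\Pi_U^\perp$ and cyclicity. Substituting the optimal $\delta U,\delta R$ into \eqref{tangentLR} and simplifying with $U^TU=I_p$ produces, after cancellation, the compact form $P_{U,R,s}(H)=HUU^T+UU^T H-UU^T HUU^T$.

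The main obstacle I anticipate is not any single calculation but the bookkeeping needed to show that the parameterization reaches each of the three ``$U$-touching'' blocks in a one-to-one fashion with no overlap, so that independent matching is legitimate. Once the orthogonal block decomposition is set up, the rest is algebra: the constraints in \eqref{tan:form:eq} are either automatically enforced (symmetry of $\delta R$, orthogonality $U^T\delta U=0$) or harmlessly invertible (the $R^{-1}$ factor in $\delta U$).
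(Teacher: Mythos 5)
Your proof is correct. Note that the paper itself states this proposition without proof, importing it from the cited references \cite{Lubich,Rouchon}; your block-decomposition argument via the four mutually orthogonal blocks $\Pi_U(\cdot)\Pi_U$, $\Pi_U(\cdot)\Pi_U^\perp$, $\Pi_U^\perp(\cdot)\Pi_U$, $\Pi_U^\perp(\cdot)\Pi_U^\perp$ is the standard route to this result and all the computations check out. One remark: the ``main obstacle'' you flag (showing the parameterization reaches the three $U$-touching blocks in a one-to-one fashion) is not actually needed. Since the $\Pi_U^\perp(\cdot)\Pi_U^\perp$ block of every admissible $\delta Y$ vanishes, the quantity $\tr\bigl((I-UU^T)H(I-UU^T)H\bigr)$ is an immediate lower bound on the cost for \emph{any} choice of $(\delta U,\delta R)$, and your explicit choice attains it by matching the other three blocks exactly; no injectivity or surjectivity of the parameterization onto those blocks is required.
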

Given a differential equation $\dotex X(t)=F(X(t))$ over PSD matrices of full size    $d\times d$, Equations \eqref{Delta}-\eqref{D} readily yield $\dot U(t)$ and $\dot R(t)$ to implement its low-rank approximation at all times,   letting $H=F(Y(t))$ with $Y(t)=U(t)R(t)U(t)^T$. 

\section{Optimal low-rank plus diagonal approximation}\label{sec:1}
We now turn to our main theoretical results  which consist of   extensions to the low-rank plus diagonal case.
\subsection{Geometry of $\Si$ and $\Sii$ } 

%In factor analysis \cite{Harman}, we construct a matrix of the form $Y=URU^T+\psi$ with $\psi$ diagonal. This leads to a manifold we denote $\Si$. The PPCA representation \cite{Tipping} is a particular case where   we restrict the diagonal matrix to be of the form $\psi=sI_d$, leading to the manifold $\Sii$. 

Take  $Y=URU^T+\psi$ with $\psi$ diagonal and having all its diagonal elements strictly positive. Recalling \eqref{tan:form:eq}, an  element $Y\in\Si$, along with tangent vectors $\delta Y\in \mathcal T_Y \Si$ to this element,  write \begin{equation} \begin{rcases}
Y&=URU^T+\psi\\\delta Y&=\delta U RU^T+U \delta R U^T+UR\delta U^T+\delta \psi
\end{rcases}\quad\text{(FA)}\label{tan:psi}
\quad \end{equation}
with  $\delta U,\delta R$ as in \eqref{tan:form:eq} and  $\delta \psi$ diagonal. 
\begin{rem}

    {In terms of geometric structure, $\Sy$ is a smooth embedded submanifold, albeit not straightforward to prove. The interested reader is referred to \cite{vandereycken2010riemannian}, which contains two proofs: Prop. 3.11, and another from \cite{helmke1992critical} which is completed therein. Unfortunately, if we wonder whether this extends to $\Si$, we discover  that $\Si$ is not a submanifold: The dimension of the tangent space is not constant, as can be seen by considering $Y=xx^T+\psi$ with $x=(1,0,0)^T$ on the one hand, leading to a dimension of 5 for all $\delta Y$,   and $x=(1,1,1)^T$  on the other,   then leading to a dimension of 6.  Thus, ${\mathrm{S_{\mathrm{diag}}^+(1,3)}}$ bears no manifold structure, but this shall  not prevent one from using \eqref{tan:psi} to represent the   matrices and their derivatives, and refer to the set of all such $\delta Y$ as the tangent space at $Y$. } 
    
\end{rem}

Let us now turn to the PPCA decomposition, of the form $Y=URU^T+sI_d$. {So far, it has been introduced as a subset of $\Si$ based on the constraint $\varphi=sI_d$, for the sake of simplicity of exposition. However, because some degrees of freedom are removed by this constraint,  the eigenvalues of $Y$ are  necessarily lower bounded by $s>0$, as $Y\succeq sI_d$, since $URU^T$ is PSD.   Thus, $Y$ cannot have arbitrarily small eigenvalues in $\mathrm{span}(U)$. This motivates the use of a slightly different--richer--parameterization. From now onwards, we  define $\Sii$ as matrices of the form 
\begin{align}Y=U(R-sI)U^T+sI=URU^T+s(I-UU^T)=URU^T+s\Pi_U^\perp,\end{align}and impose that $R\in \mathrm{S^+(p,p)}$, $U\in \St$ and $s>0$. This allows for $\Sii$ to be a bigger approximating set, while retaining that $Y$ is positive definite. }

 This leads to the following geometry for $\Sii$. An element $Y\in\Sii$ with associated tangent vector $\delta Y\in \mathcal T \Sii$ in the PPCA form writes\begin{equation} \begin{rcases}
Y&\!\!\!\!=URU^T+s(I-UU^T)=U(R-sI)U^T+sI\\\delta Y&\!\!\!\!=\delta U (R-sI)U^T+U(\delta R-\delta s I)U^T\\&\hspace{3cm}+U(R-sI)\delta U^T+\delta s I
\end{rcases} \text{(PPCA)}\label{tan:psi2}
\quad \end{equation}
with  $\delta U,\delta R$ as in \eqref{tan:form:eq}    and $\delta s\in\RR$.

\subsection{Optimal approximation  in the PPCA form}

Let us start with the simpler PPCA decomposition.
\begin{prop}\label{prop:PPCA}
The orthogonal projection   of a symmetric matrix $H\in\RR^{d\times d}$ onto   $\mathcal T_Y\in\Sii$   is
$\delta Y =\delta U (R-sI)U^T+U(\delta R-\delta s I)U^T +U(R-sI)\delta U^T+\delta s I$ where the matrices are given by: 
\begin{equation}
\begin{aligned}
\delta s&=[\tr(H)-\tr(U^THU)]/(d-p),\\
\delta U &=(I-UU^T)  HU( R-sI)^{-1},\\
\delta R &=U^T HU.
\end{aligned}\label{matrices:PPCA}\end{equation}
The tangent vector then writes
$$P_{U,R,s}(H)=HUU^T +UU^TH-UU^THUU^T+
 \delta s (I-UU^T),$$
This choice minimizes over matrices of the form \eqref{tan:psi2} the following cost
\begin{equation} C_{H;U,R,s}(\delta U, \delta R, \delta s)=\tr((  H-\delta Y)^2).\label{cost42}\end{equation}
\end{prop}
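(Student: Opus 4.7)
The plan is to exploit the orthogonal block decomposition of $\RR^{d\times d}$ induced by the projectors $\Pi_U=UU^T$ and $\Pi_U^\perp=I-UU^T$. Under the Frobenius inner product, blocks $\Pi_U A \Pi_U$, $\Pi_U^\perp A \Pi_U$, $\Pi_U A \Pi_U^\perp$, and $\Pi_U^\perp A \Pi_U^\perp$ of a matrix $A$ are pairwise orthogonal, so the cost $C=\|H-\delta Y\|^2$ in \eqref{cost42} decomposes into four pieces that can be minimized independently. Since everything is symmetric, the two mixed blocks contribute the same, so effectively only three scalar- or matrix-valued problems remain.

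First, I would compute the four blocks of $\delta Y$ from \eqref{tan:psi2}. Using the constraint $U^T\delta U=0$ built into \eqref{tan:form:eq}, and splitting $\delta s\, I=\delta s\,\Pi_U+\delta s\,\Pi_U^\perp$ so that the $\Pi_U$ part cancels the $-\delta s I$ hidden inside $U(\delta R-\delta s I)U^T$, one obtains
\begin{align*}
\Pi_U\delta Y\Pi_U &= U\delta R\,U^T, & \Pi_U^\perp\delta Y\Pi_U &= \delta U (R-sI) U^T,\\
\Pi_U\delta Y\Pi_U^\perp &= U(R-sI)\delta U^T, & \Pi_U^\perp\delta Y\Pi_U^\perp &= \delta s\,\Pi_U^\perp.
\end{align*}
Matching these against the corresponding blocks $\Pi_U H\Pi_U$, $\Pi_U^\perp H\Pi_U$, $\Pi_U^\perp H\Pi_U^\perp$ of the symmetric target $H$, the cost becomes
\[
\|U^T H U-\delta R\|^2\;+\;2\,\|\Pi_U^\perp H U-\delta U(R-sI)\|^2\;+\;\|\Pi_U^\perp H\Pi_U^\perp-\delta s\,\Pi_U^\perp\|^2,
\]
where I used $U^TU=I_p$ to simplify each norm.

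The three summands are now decoupled. The first is an unconstrained symmetric least-squares problem in $\delta R$ giving $\delta R=U^T H U$. The second, minimized over $\delta U$ subject to $\delta U=\Pi_U^\perp\Gamma$, yields $\delta U=\Pi_U^\perp H U(R-sI)^{-1}$; note the factor $\Pi_U^\perp$ on the left of the residual automatically preserves the tangent-space constraint, and invertibility of $R-sI$ is the implicit genericity assumption. The third is a scalar quadratic in $\delta s$: since $\tr(\Pi_U^\perp)=d-p$ and $\tr(H\Pi_U^\perp)=\tr(H)-\tr(U^T H U)$, the first-order condition gives the stated formula. Finally, reassembling the three components and using $\delta U(R-sI)U^T=\Pi_U^\perp H UU^T$ and its transpose, the sum collapses to $HUU^T+UU^T H-UU^T H UU^T+\delta s(I-UU^T)$, which is the claimed closed form. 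The one step requiring care is verifying the block decoupling — in particular checking that the $\delta s I$ contribution distributes cleanly across the $\Pi_U$ and $\Pi_U^\perp$ blocks — after which the rest is routine linear algebra.
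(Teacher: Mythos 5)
Your proof is correct, and it takes a genuinely different route from the paper's. The paper proceeds by nested minimization: it absorbs the unknown $\delta s$ into a modified target $\tilde H=H-\delta s\,(I-UU^T)$, invokes Proposition \ref{Lubich:prop} to solve the inner problem over $(\delta U,\delta R)$ in closed form together with the value of its minimum, $\tr\bigl((I-UU^T)\tilde H(I-UU^T)\tilde H\bigr)$, and only then differentiates that residual in $\delta s$. You instead split $\RR^{d\times d}$ into the four mutually orthogonal blocks determined by $\Pi_U$ and $\Pi_U^\perp$, observe that the tangent vector \eqref{tan:psi2} places $\delta R$, $\delta U$, and $\delta s$ into disjoint blocks (your block computations, including the cancellation of $-\delta s\,UU^T$ against the $\Pi_U$ part of $\delta s\,I$, all check out), and solve three decoupled least-squares problems. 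What your approach buys is transparency and self-containedness: the decoupling makes it immediately visible why the optimal $\delta U$ and $\delta R$ do not depend on $\delta s$ (a fact the paper must verify by simplifying $(I-UU^T)\tilde HU$ and $U^T\tilde HU$), it does not rely on Proposition \ref{Lubich:prop} as a black box, and it hands you the closed form of $P_{U,R,s}(H)$ directly as the sum of the three blocks. What the paper's route buys is reuse: the same nested-minimization template carries over to the FA case in Proposition \ref{prop:FA}, where the diagonal perturbation $\delta\psi$ does \emph{not} sit in a single block and the clean decoupling you exploit is no longer available. The only points worth flagging are ones you already noted: invertibility of $R-sI$ is an implicit genericity assumption (shared with the paper), and one should check, as you do, that the unconstrained minimizers $\delta R=U^THU$ and $\delta U=\Pi_U^\perp HU(R-sI)^{-1}$ automatically satisfy the constraints $\delta R=\delta R^T$ and $U^T\delta U=0$ of \eqref{tan:form:eq}.
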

\begin{proof}
The cost \eqref{cost42}, that is, the squared Frobenius norm between $H$ and the tangent vector,  rewrites
$$\tr((\tilde H-[\delta U \tilde RU^T+U\delta R U^T+U\tilde R\delta U^T])^2),
$$with $\tilde H=H-\delta s (I-UU^T)$ and $\tilde R=R-sI$. In other terms
$$
C_{H;U,R,s}(\delta U, \delta R, \delta s)=J_{\tilde H;U,\tilde R}(\delta U,\delta R)  ,      
$$with $J$ as in  \eqref{cost}. The next step consists in writing
\begin{align}
\min_{\delta U, \delta R, \delta s}C_{H;U,R,s}&=\min_{\delta s} \bigl(\min_{\delta U, \delta R}C_{H;U,R,s}\bigr)\\&=\min_{\delta s} \bigl(\min_{\delta U, \delta R}J_{\tilde H;U,\tilde R} \bigr)
\\&=\min_{\delta s}\tr{\bigl((I-UU^T)\tilde H(I-UU^T)\tilde H \bigr)},\label{lasteq}
\end{align}using Proposition  \ref{Lubich:prop}, which also shows  that  in the present case $\min_{\delta U, \delta R}J_{\tilde H;U,\tilde R}$ corresponds to
\begin{align}
\delta U &=(I-UU^T)\tilde HU\tilde R^{-1}=(I-UU^T)  HU\tilde R^{-1}\\
\delta R &=U^T\tilde HU=U^T HU.
\end{align} 
 
{To conclude, we still have to solve \eqref{lasteq}}. Recalling $\tilde H=H-\delta s (I-UU^T)$ we  differentiate $\tr{\bigl((I-UU^T)\tilde H(I-UU^T)\tilde H \bigr)}$ with respect to $\delta s$  
\begin{align*}
J'( \delta s)&=-2\tr{\bigl((I-UU^T)\tilde H\bigr)} \\&=-2\tr{\bigl((I-UU^T)(H-\delta s (I-UU^T))\bigr)}
\end{align*}
and we find  $\delta s=[\tr(H)-\tr(U^THU)]/(d-p)$. 
\end{proof}

\subsection{Optimal approximation in the FA form}
The problem of projecting onto $\Si$ is more difficult than the latter, as in the proof of Proposition \ref{prop:PPCA} we extensively used the isotropy of the diagonal term $sI_d$, namely $U(sI_d)U^T=sUU^T$. However, it turns out we may find a closed-form expression also in the FA decomposition.  This allows for   capturing different individual variances in the subspace orthogonal to $\mathrm{span}(U)$.   

The first step is to transform the optimization problem over the  matrix $\psi$ into a least-squares problem for a vector consisting of its diagonal. 
The cost we want to minimize is
\begin{align*}
&C_{H;U,R, \psi}(\delta U, \delta R, \delta \psi)\\&:=\tr((H-[\delta U RU^T+U\delta R U^T+UR\delta U^T+\delta \psi])^2)\\
&=\tr((H-\delta \psi-[\delta U RU^T+U\delta R U^T+UR\delta U^T])^2)\\&=
\tr((\tilde H-[\delta U RU^T+U\delta R U^T+UR\delta U^T])^2)\\&=J_{ \tilde H, U,R}(\delta U, \delta R),
\end{align*}where now $\tilde H$ denotes  $\tilde H=H-\delta \psi$.   We write $$
\min_{\delta U, \delta R, \delta \psi}C(\delta U, \delta R, \delta \psi)=\min_{\delta \psi} \bigl(\min_{\delta U, \delta R}J_{\tilde H;U,\tilde R}(\delta U,\delta R) \bigr),
$$which is equal to $\min_{\delta \psi} J_{\tilde H;U,  R}(\delta U,\delta R) $ with
\begin{align}
\delta U &=(I-UU^T)\tilde HUR^{-1},\label{UUeq1}\\
\delta R &=U^T\tilde HU,\label{UUeq2}
\end{align}
 using \eqref{Delta}, \eqref{D}. Using Proposition \ref{Lubich:prop}, we see the cost function to minimize becomes
\begin{equation}
\begin{aligned}
\tilde C(\delta \psi)&:=\tr\bigl((I-UU^T)\tilde H(I-UU^T)\tilde H\bigr)\\&=\tr\bigl(\tilde H^2\bigr)-2\tr\bigl(UU^T\tilde H^2\bigr)+\tr\bigl(UU^T\tilde H UU^T\tilde H\bigr).\label{cost0}
\end{aligned}\end{equation} 
\eqref{cost0} may be re-written as a vector least squares problem for the vector $\overline{\delta \psi}:=(\delta \psi_{11},\delta \psi_{22},\dots,\delta \psi_{dd})^T={\rm diag}(\delta \psi)\in\RR^d$. This leads to the following optimal approximation.

\begin{prop}\label{prop:FA}The orthogonal projection  of a symmetric matrix $H\in\RR^{d\times d}$ onto $\mathcal T_Y\in\Si$ is $\delta Y=\delta U RU^T+U \delta R U^T+UR\delta U^T+\delta \psi$ where the matrices  are given by\begin{equation}
\begin{aligned}&{\rm diag}(\delta \psi)  =\overline{\delta \psi} , \\
&\delta U  =(I-UU^T) (H-\delta \psi )UR^{-1}, \\
&\delta R =U^T(H-\delta \psi )U,
\end{aligned} \label{matrices:FA3}   
\end{equation}and where the vector  $\overline{\delta \psi} \in\RR^d$  that encodes the diagonal matrix $\delta\psi\in \RR^{d\times d}$ is defined by\begin{align}
 \overline{\delta \psi} &=((I-UU^T)^{\circ 2})^{+}{\rm diag}((I-UU^T)H (I-UU^T)) , \label{matrices:FA1}\end{align}letting $+$ denote the Moore-Penrose inverse and where $\circ$ denotes the   element-wise  (Hadamard) matrix product. 
 \end{prop}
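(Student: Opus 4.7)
The plan is to pick up directly from the reduction already carried out in the text, where the cost has been simplified to
$$\tilde C(\delta\psi)=\tr\bigl((I-UU^T)\tilde H(I-UU^T)\tilde H\bigr),\qquad \tilde H=H-\delta\psi,$$
with the optimal $\delta U,\delta R$ given by \eqref{UUeq1}--\eqref{UUeq2}. Writing $P:=I-UU^T$, which is a symmetric projector ($P^2=P$), a short cyclic trace manipulation shows
$$\tr(P\tilde H P\tilde H)=\tr\bigl((P\tilde H P)^2\bigr)=\|P\tilde H P\|_F^2=\|PHP-P\,\delta\psi\,P\|_F^2,$$
since $P\tilde H P$ is symmetric. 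So the remaining task is the unconstrained linear least-squares problem of choosing the diagonal matrix $\delta\psi$ minimizing $\|PHP-P\,\delta\psi\,P\|_F^2$.

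The key reformulation is to linearize $P\,\delta\psi\,P$ in the vector $\overline{\delta\psi}\in\RR^d$ of diagonal entries. If $p_k=Pe_k$ denotes the $k$-th column of $P$, a direct entry-wise computation gives
$$P\,\delta\psi\,P=\sum_{k=1}^d \overline{\delta\psi}_k\,p_k p_k^T,$$
so the cost becomes a standard least-squares problem for $\overline{\delta\psi}$ with respect to the family $\{p_kp_k^T\}_{k=1}^d$ in the Frobenius inner product. I would then write down the normal equations: for each $\ell$,
$$\sum_{k}\overline{\delta\psi}_k\,\langle p_\ell p_\ell^T,\,p_k p_k^T\rangle_F=\langle p_\ell p_\ell^T,\,PHP\rangle_F.$$
The two Gram-type quantities evaluate cleanly: $\langle p_\ell p_\ell^T, p_k p_k^T\rangle_F=(p_\ell^T p_k)^2=P_{\ell k}^2$, using $P^2=P$, and $\langle p_\ell p_\ell^T, PHP\rangle_F=e_\ell^T PHP e_\ell=(PHP)_{\ell\ell}$. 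Hence the normal equations read
$$(P^{\circ 2})\,\overline{\delta\psi}=\mathrm{diag}(PHP),$$
which is exactly \eqref{matrices:FA1} once one notes that $P^{\circ 2}=(I-UU^T)^{\circ 2}$.

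The only subtle point, and the one I expect to be the main obstacle, is that the Gram matrix $P^{\circ 2}$ need not be invertible (consider the extreme case $U=0$, where $P=I$ and $P^{\circ 2}=I$ is fine, but if $U$ spans all of $\RR^d$ then $P=0$; intermediate degenerate configurations may also occur). In that case the normal equations still admit solutions, and the minimum-norm solution is given by the Moore--Penrose pseudoinverse $\overline{\delta\psi}=(P^{\circ 2})^{+}\mathrm{diag}(PHP)$. I would justify this by the standard fact that any solution of the normal equations attains the minimum of $\tilde C$, and the pseudoinverse singles out a canonical one; all induce the same tangent vector $\delta Y$ through \eqref{matrices:FA3} since the cost depends only on $P\,\delta\psi\,P$. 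Finally, substituting the chosen $\delta\psi$ back into \eqref{UUeq1}--\eqref{UUeq2} yields the expressions for $\delta U$ and $\delta R$ stated in \eqref{matrices:FA3}, concluding the proof.
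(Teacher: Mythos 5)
Your proof is correct and follows essentially the same route as the paper's: after the reduction via Proposition~1, both arguments recast the residual minimization over $\delta\psi$ as a linear least-squares problem in $\overline{\delta\psi}$ whose normal equations have Gram matrix $(I-UU^T)^{\circ 2}$ and right-hand side $\diag\bigl((I-UU^T)H(I-UU^T)\bigr)$ --- you obtain the Gram entries from the rank-one decomposition $P\,\delta\psi\,P=\sum_k\overline{\delta\psi}_k\,p_kp_k^T$, the paper from the identity $\tr(\diag(x)A\diag(y)B^T)=x^T(A\circ B)y$, which amounts to the same computation. Your added remarks on the possibly singular Gram matrix, the role of the Moore--Penrose inverse, and the fact that all solutions of the normal equations yield the same tangent vector $\delta Y$ are correct and address a degeneracy the paper's proof leaves implicit.
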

  \begin{proof}
      Recall the  notation  $\Pi_U^\perp=I_d-UU^T$. We may rewrite the cost \eqref{cost0} in terms of the vector $\overline{\delta \psi}$, i.e., $\tilde C(\delta \psi)=\bar C(\overline{\delta \psi})$ and we find:
\begin{align*}
&\bar C(\overline{\delta \psi})={\rm Tr}\bigl((\Pi_U^\perp H-\Pi_U^\perp{\rm diag}(\overline{\delta \psi}))^2\bigr)\\
&=c-2{\rm Tr}(\Pi_U^\perp H \Pi_U^\perp{\rm diag}(\overline{\delta \psi}))+\rm{Tr} (diag(\overline{\delta \psi}) \Pi_U^\perp {\rm diag}(\overline{\delta \psi})\Pi_U^\perp),
\end{align*} 
where $c$ is a constant with respect to $\overline{\delta \psi}$. This may be re-written as a vector least-squares problem for the vector $\overline{\delta \psi}$ using the relation ${\rm Tr} ({\rm diag}(x)A{\rm diag}(y)B^T)=x^T(A \circ B)y$ where $\circ$ is the element-wise matrix product. If we note $\mathrm{1_d}$ the vector of ones, the cost becomes:
$$\tilde C(\delta \psi)=c-2 \mathrm{1_d}^T (\Pi_U^\perp H \Pi_U^\perp \circ I) \overline{\delta \psi} +\overline{\delta \psi}^T(\Pi_U^\perp \circ \Pi_U^\perp)\overline{\delta \psi}.$$
Taking the derivative with respect to  $\overline{\delta \psi}$, we obtain: $$\nabla \bar C (\overline{\delta \psi})=-2{\rm diag}(\Pi_U^\perp H \Pi_U^\perp)+2(\Pi_U^\perp \circ \Pi_U^\perp)\overline{\delta \psi}$$Zeroing the gradient yields \eqref{matrices:FA1} indeed.
  \end{proof}

 \section{Implementation}\label{sec:2}
 
{ We now discuss the interest of the results for the high-dimensional setting. Our goal, for tractability,  is to elicit  operations and storage costs being linear in the dimension $d$.}  We prove that both methods lead to projections that require linear computation cost in the dimension $d$, and discuss the numerical cost in detail. A key result to our analysis is that any matrix $Y$  of $\Si$ or $\Sii$ is easily inverted via the Woodbury  lemma: 
 \begin{align}(URU^T+\psi)^{-1}=\psi^{-1}-\psi^{-1}U(R^{-1}+U^T\psi^{-1}U)^{-1}U^T\psi^{-1},\label{woodbury:eq}\end{align}
 a fact we will extensively use. Note that it also proves  matrices of $\Si$ and $\Sii$, contrary to low-rank matrices of $\Sy$, can be inverted, and this with a  numerical cost of inversion  is $O(p^3d)$, hence linear in the   dimension $d$.

\subsection{Numerically efficient formulation for the FA form}
Equation \eqref{matrices:FA1}  necessitates (pseudo)-inversion of a $d\times d$ matrix $(I-UU^T)^{\circ 2}$ which may hinder its use in large dimension. However, it is amenable to a linear computation cost. 

\begin{prop}\label{prop:FA2}The orthogonal projection \eqref{matrices:FA1} can be performed with linear computation cost in $d$. Indeed,     the required vector $\overline{\delta \psi}\in\RR^d$ is given by
\eqref{min1} below, and can be advantageously computed via \eqref{argmin2}. 
 \end{prop}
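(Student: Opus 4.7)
The plan is to turn the normal equation obtained at the end of the proof of Proposition~\ref{prop:FA}, namely
\[
(I-UU^{T})^{\circ 2}\,\overline{\delta\psi} \;=\; \mathrm{diag}\bigl((I-UU^{T})H(I-UU^{T})\bigr),
\]
into a system whose coefficient matrix is \emph{diagonal plus low rank}, and then to solve it either in closed form via the Woodbury identity (giving \eqref{min1}) or as a reduced-dimensional quadratic program (giving \eqref{argmin2}). Both options should cost $O(d\cdot p^{O(1)})$ operations.

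First I would establish the key decomposition. Since $I\circ UU^{T}=\mathrm{diag}(\mathrm{diag}(UU^{T}))$, and because the $(i,j)$-entry of $(UU^{T})^{\circ 2}$ is $\langle U_{i,:},U_{j,:}\rangle^{2}=\langle U_{i,:}U_{i,:}^{T},U_{j,:}U_{j,:}^{T}\rangle$, the matrix $(UU^{T})^{\circ 2}$ is the Gram matrix of the vectorized rank-one matrices formed from the rows of $U$. Using the symmetry of these rank-one matrices, I can factor $(UU^{T})^{\circ 2}=VV^{T}$ with an explicit $V\in\mathbb{R}^{d\times p(p+1)/2}$ whose entries are the monomials $U_{ik}U_{il}$. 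Collecting these pieces yields
\[
(I-UU^{T})^{\circ 2} \;=\; D + VV^{T},\qquad D \;=\; \mathrm{diag}\bigl(\mathbf{1}-2\,\mathrm{diag}(UU^{T})\bigr),
\]
i.e.\ a $d\times d$ diagonal matrix plus a correction of rank at most $p(p+1)/2$, storable and applicable in $O(dp^{2})$ flops.

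Second, I would apply the Woodbury identity in its Moore--Penrose form (needed because $D$ can have zero or negative entries, even though $D+VV^{T}$ is PSD) to write
\[
\overline{\delta\psi} \;=\; (D+VV^{T})^{+}\,\mathrm{diag}\bigl((I-UU^{T})H(I-UU^{T})\bigr),
\]
which is equation \eqref{min1}. Each matrix--vector product with $D+VV^{T}$ costs $O(dp^{2})$; inversion of the resulting $p(p+1)/2$-dimensional Woodbury correction is $O(p^{6})$. Furthermore, the right-hand side $\mathrm{diag}((I-UU^{T})H(I-UU^{T}))$ itself can be assembled in $O(dp^{2})$ operations whenever $H$ admits a low-rank-plus-diagonal representation, which is the case in the targeted applications. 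To sidestep forming a pseudo-inverse and to remain stable when $D$ is nearly singular, I would then re-express the same minimizer as a reduced argmin: any solution of the normal equation can be written as $\overline{\delta\psi}=D^{+}(b-V\gamma)$ for some $\gamma\in\mathbb{R}^{p(p+1)/2}$, and substituting back produces a small quadratic program in $\gamma$ whose assembly and solution cost $O(dp^{2}+p^{6})$ operations. This is the argmin reformulation \eqref{argmin2}.

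The main obstacle I anticipate is the potential degeneracy of $D$. Generically $(UU^{T})_{ii}<1/2$ makes $D$ positive definite and the Woodbury identity applies unchanged; however since $(UU^{T})_{ii}\in[0,1]$ can equal $1$ (when $e_{i}\in\mathrm{range}(U)$), $D$ may acquire zero or negative entries and $(I-UU^{T})^{\circ 2}$ itself may be rank-deficient. Showing that \eqref{min1} still yields the minimum-norm least-squares solution, and that the reduced formulation \eqref{argmin2} produces exactly the same projected tangent vector in those corner cases, requires careful bookkeeping; the rest of the argument is a routine flop count establishing linearity in $d$.
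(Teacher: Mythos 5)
Your proposal follows essentially the same route as the paper: you identify $(I-UU^{T})^{\circ 2}$ as a diagonal matrix $I-2\,\mathrm{diag}(\mathrm{diag}(UU^{T}))$ plus a Gram factor $VV^{T}$ of rank at most $p(p+1)/2$ built from the monomials $U_{ik}U_{il}$ --- exactly the matrices $\bar D$ and $\Upsilon$ of Lemma~\ref{lemme}, which the paper obtains instead by expanding the trace cost term by term --- and then apply Woodbury to reach \eqref{argmin2} at cost $O(dp^{2}+p^{6})$. Your caveat about the possible singularity of $I-2\bar D$ (when some $(UU^{T})_{ii}=1/2$) is a genuine corner case that the paper likewise sidesteps by assuming invertibility before invoking \eqref{argmin2}.
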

  
To prove the result we start with a lemma, whose proof is technical and hence postponed to the Appendix.    

\begin{lem}
The cost $\tilde C(\delta \psi)$ may be re-written as a function of the vector $ \overline{\delta \psi}\in\RR^d$ as follows:\begin{equation}
 \begin{aligned}\tilde C(\delta \psi)&=\bar C(\overline{\delta \psi}):=\alpha+(\bar h-\overline{\delta \psi})^T(\bar h-\overline{\delta \psi}) 
 \\&~+4\bar h_U^T\overline{\delta \psi}-2\overline{\delta \psi}^T \bar D\overline{\delta \psi}- 2\Lambda^T\overline{\delta \psi}+\overline{\delta \psi}^T\Upsilon \Upsilon^T\overline{\delta \psi},
\end{aligned}\label{barcost}\end{equation}
 where $\alpha$ is a constant, $\bar h,\bar h_U,\Lambda$ are all vectors of $\RR^d$, and were $\bar D$ is a diagonal $d\times d$ matrix, and $\Upsilon$ is a matrix, all being given below. Hence the gradient writes \begin{align}
 \frac{1}{2}\nabla \bar  C(\overline{\delta \psi})=\bigl(I-2\bar D+\Upsilon \Upsilon^T\bigr)\overline{\delta \psi}-\Lambda+2\bar h_U-\bar h,
\end{align}
 and the optimizer is given by
\begin{align}
 \overline{\delta \psi}=\bigl(I-2\bar D+\Upsilon \Upsilon^T\bigr)^{+} \bigl(\bar h-2\bar h_U+\Lambda\bigr),\label{min1}
\end{align}
The parameters in the equations above are:
\begin{align}
   \bar h&={\rm diag}(H),\qquad  \bar h_U:={\rm diag}(UU^TH) \\ \Lambda&=
(\Lambda_1,\dots,\Lambda_d)^T\in\RR^d\\\Lambda_k&=\sum_{1\leq i,j\leq r} (U^THU)_{ij} U_{ki}U_{kj},\quad 1\leq k\leq d 
 \\ \Gamma_{ij}&=((\Gamma_{ij})_1,\dots,(\Gamma_{ij})_d)^T\in\RR^d\\(\Gamma_{ij})_k&=U_{ki}U_{kj},\quad 1\leq k\leq d ,
\end{align}  with $\Upsilon$   a matrix of size $d\times \frac{p(p+1)}{2}$ whose first  $\frac{p(p-1)}{2}$ columns are given by the vectors $\sqrt{2}\Gamma_{ij}$, $i<j$, and whose last $r$ columns are $\Gamma_{ii}$, and $\bar D$ is the diagonal matrix defined by $\bar D_{ii}=\sum_{j=1}^rU_{ij}^2$. It may be checked \eqref{min1} coincides with \eqref{matrices:FA1}.\label{lemme}
\end{lem}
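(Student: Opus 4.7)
The plan is to work directly from the form $\tilde C(\delta \psi)=\tr\bigl(\Pi_U^\perp (H-\delta\psi)\Pi_U^\perp (H-\delta\psi)\bigr)$ established before the lemma, expand it in powers of $\delta\psi$, and translate every trace into a bilinear form in the vector $\overline{\delta\psi}$. Writing $\tilde H=H-\delta\psi$, I expand
$$
\tilde C(\delta\psi)=\underbrace{\tr(\Pi_U^\perp H\Pi_U^\perp H)}_{=:\alpha_0}-2\tr(\Pi_U^\perp H\Pi_U^\perp \delta\psi)+\tr(\Pi_U^\perp \delta\psi\Pi_U^\perp \delta\psi).
$$
The first term is the constant. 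The other two will supply, respectively, the linear coefficients $\bar h,\bar h_U,\Lambda$ and the quadratic pieces $I$, $\bar D$ and $\Upsilon\Upsilon^T$.

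First I would handle the linear term. Expanding $\Pi_U^\perp H\Pi_U^\perp=H-UU^T H-HUU^T+UU^T HUU^T$ and using $\tr(A\,\delta\psi)=\sum_i A_{ii}(\delta\psi)_{ii}=\mathrm{diag}(A)^T\overline{\delta\psi}$ whenever $\delta\psi$ is diagonal, the three contributions become $\bar h^T\overline{\delta\psi}$, $2\bar h_U^T\overline{\delta\psi}$ (the two mixed terms coincide by symmetry of $H$), and $\Lambda^T\overline{\delta\psi}$; for the last one, using $U^T HU$ as a $p\times p$ block and writing $(UU^T HUU^T)_{kk}=\sum_{i,j}U_{ki}U_{kj}(U^T HU)_{ij}$ recovers exactly $\Lambda_k$ as defined in the statement. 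Multiplying by $-2$, the full linear contribution is $-2\bar h^T\overline{\delta\psi}+4\bar h_U^T\overline{\delta\psi}-2\Lambda^T\overline{\delta\psi}$.

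Next I treat the quadratic term. Expanding $\Pi_U^\perp \delta\psi\Pi_U^\perp \delta\psi$ gives four summands whose traces I compute using the diagonal structure of $\delta\psi$: $\tr(\delta\psi^2)=\overline{\delta\psi}^T\overline{\delta\psi}$; both $\tr(UU^T\delta\psi^2)$ and $\tr(\delta\psi UU^T\delta\psi)$ equal $\overline{\delta\psi}^T\bar D\,\overline{\delta\psi}$, since $(UU^T)_{ii}=\sum_j U_{ij}^2=\bar D_{ii}$; and for $\tr(UU^T\delta\psi UU^T\delta\psi)$ I invoke the identity $\tr(\mathrm{diag}(x)A\,\mathrm{diag}(y)B^T)=x^T(A\circ B)y$ already used in the proof of Proposition \ref{prop:FA}, obtaining $\overline{\delta\psi}^T(UU^T\circ UU^T)\overline{\delta\psi}$. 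The crucial identification is $UU^T\circ UU^T=\Upsilon\Upsilon^T$: writing $(UU^T\circ UU^T)_{km}=\bigl(\sum_i U_{ki}U_{mi}\bigr)^2=\sum_{i,j}U_{ki}U_{kj}U_{mi}U_{mj}$ and splitting the sum over $i=j$ and $i\neq j$ produces $\sum_i(\Gamma_{ii})_k(\Gamma_{ii})_m+2\sum_{i<j}(\Gamma_{ij})_k(\Gamma_{ij})_m$, which is exactly $\Upsilon\Upsilon^T$ with the prescribed $\sqrt{2}$ weighting on the off-diagonal columns. This is the main computational obstacle. Assembling the four pieces yields $\overline{\delta\psi}^T\overline{\delta\psi}-2\overline{\delta\psi}^T\bar D\,\overline{\delta\psi}+\overline{\delta\psi}^T\Upsilon\Upsilon^T\overline{\delta\psi}$, and combining with the linear part and absorbing $\alpha_0+\bar h^T\bar h$ into the constant $\alpha$ produces exactly \eqref{barcost}, after recognizing $-2\bar h^T\overline{\delta\psi}+\overline{\delta\psi}^T\overline{\delta\psi}$ as part of $(\bar h-\overline{\delta\psi})^T(\bar h-\overline{\delta\psi})$.

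Finally, since $\bar C$ is a quadratic form in $\overline{\delta\psi}$, differentiation is direct: the gradient is
$$
\tfrac{1}{2}\nabla\bar C(\overline{\delta\psi})=\bigl(I-2\bar D+\Upsilon\Upsilon^T\bigr)\overline{\delta\psi}-\bar h+2\bar h_U-\Lambda,
$$
and setting it to zero and pseudo-inverting (the matrix may be rank deficient since $\bar C$ is only convex, not strictly convex) gives \eqref{min1}. To conclude I verify the equivalence with \eqref{matrices:FA1}: the Hadamard identity $(I-UU^T)^{\circ 2}=I-2(UU^T\circ I)+UU^T\circ UU^T=I-2\bar D+\Upsilon\Upsilon^T$ matches the quadratic factor, and reading off the diagonal of $(I-UU^T)H(I-UU^T)=H-UU^T H-HUU^T+UU^T HUU^T$ entry by entry gives $\bar h-2\bar h_U+\Lambda$, matching the right-hand side. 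This confirms the two expressions for $\overline{\delta\psi}$ agree.
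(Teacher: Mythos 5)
Your proof is correct and follows essentially the same route as the paper's: both expand the quadratic cost $\tr\bigl(\Pi_U^\perp(H-\delta\psi)\Pi_U^\perp(H-\delta\psi)\bigr)$ and identify the linear and quadratic coefficients in $\overline{\delta\psi}$, your only variations being that you organize the expansion by degree in $\delta\psi$ and obtain the quadratic factor via the Hadamard identity $UU^T\circ UU^T=\sum_{i,j}\Gamma_{ij}\Gamma_{ij}^T=\Upsilon\Upsilon^T$ rather than the paper's direct index computation, and that you additionally spell out the check that \eqref{min1} coincides with \eqref{matrices:FA1}, which the paper leaves to the reader. One cosmetic slip: completing the square \emph{adds} $\bar h^T\bar h$, so the constant is $\alpha=\alpha_0-\bar h^T\bar h$ (consistent with Appendix \ref{AppendixB}), not ``$\alpha_0+\bar h^T\bar h$ absorbed into $\alpha$''; this does not affect the validity of the lemma since $\alpha$ is only required to be a constant.
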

The problem with \eqref{min1} is that it requires inverting a $d\times d$ matrix, which is not affordable computationally.  When $I-2\bar D+\Upsilon \Upsilon^T$ is invertible, we can use Woodbury matrix identity instead to  express  the solution  \eqref{min1}  as 
\begin{align} \overline{\delta \psi}=\bigl(\varphi-\varphi\Upsilon\bigl(I_{\frac{p(p+1)}{2}}+\Upsilon^T\varphi\Upsilon \bigr)^{-1} \Upsilon^T\varphi\bigr)\bigl(\bar h-2\bar h_U+\Lambda\bigr),\label{argmin2}
\end{align}where $\varphi:=(I_d-2\bar D)^{-1}$ is the inverse of a diagonal matrix, which may be efficiently computed using only vectors of $\RR^d$, and where the  $\frac{p(p+1)}{2}\times \frac{p(p+1)}{2}$ matrix $I_{\frac{p(p+1)}{2}}+\Upsilon^T\psi\Upsilon $ needs be inverted.

 \subsection{Computation cost}

As a preliminary remark, we see that whatever the chosen low-rank decomposition, computing the approximation requires storing and projecting the $d\times d$ symmetric matrix $H$. To perform this in a high dimension we need to assume some sort of sparsity, or low-rank structure for $H$ of the kind $H=GG^T$ with $G\in\RR^{d\times r}$ where $r\ll d$, or $H\in\mathrm{S_\psi^+(r,d)}$.   This problem is already present in the literature on low-rank approximations and has proved not to be a limitation in applications, see \cite{Lubich, Rouchon} and articles that followed.  For our complexity analysis, we will henceforth suppose the computational cost of computing $U^THU$ is $O(dp^2)$.  
Note that to project the matrix in the PPCA or FA form, all equations   must be coded ``vectorially'', that is, diagonal matrices are encoded by vectors, and we carefully choose the order of operations to never multiply full-size matrices, e.g.,   in  \eqref{argmin2}. 

 Propositions \ref{prop:PPCA}, \ref{prop:FA} and \ref{prop:FA2} give the formulas that one needs to evolve the factors $U(t),R(t),s(t)$ or $U(t),R(t),\psi(t)$, where their derivatives are given by \eqref{matrices:PPCA} and \eqref{matrices:FA3}, \eqref{argmin2} respectively. 
In the PPCA form, to compute matrices \eqref{matrices:PPCA} we see we need to perform 
 $O(dp^2)$ operations.   
In the FA form,   implementation of \eqref{matrices:FA3}  retains linear complexity in $d$. However in the efficient implementation \eqref{argmin2} of \eqref{matrices:FA1}, we need to invert the  $\frac{p(p+1)}{2}\times \frac{p(p+1)}{2}$ matrix $I_{\frac{p(p+1)}{2}}+\Upsilon^T\psi\Upsilon $. This yields a computational cost of order $O(dp^2)+O(p^6)$. We thus see that FA decomposition--albeit richer--comes at a price:  the number of ``factors'' $p$ that may be used is more limited than in the PPCA decomposition.

 To illustrate  how the methods compare in terms of numerical cost, we propose to apply the three methods for the approximation of a large-scale matrix $H=GG^T$ where $G$ is a matrix of size $d \times r$ generated randomly, where  $r>p$  is low,   but sufficiently large for the $p$-rank approximation not to be   exact. Using a product form for $H$ allows for  low-memory-cost  operations, in particular, we have $UHU^T=(UG)(UG)^T$ yielding a linear cost in $d$. The execution times of the different algorithms are shown in Table \ref{tableXP} in dimension $d=10^6$ where the computations are performed on a standard laptop with Octave. All algorithms scale well to very high-dimensional problems. However, FA has a   higher computation time since it needs to  invert   a $\frac{p(p+1)}{2} \times\frac{p(p+1)}{2}$ matrix.    
\begin{table}[!ht]
\caption{Matrix projection with $d=10^6$, $p=10$, and $r=100$ }
\centering
\begin{tabular}{|c|c|c|c|}
  \hline
 Set & Projection method & Execution time (s)  & Memory cost\\
  \hline
    $\Sy$ &``Low-rank"   & 3.5 s   & $dp$ \\
    $\Sii$ & ``PPCA"  & 7 s    & $dp$\\
    $\Si$ & ``FA"  & 35 s    & $dp+p^4/4$\\
  \hline
\end{tabular}
\label{tableXP}
\end{table}

\section{Application to the Riccati equation}\label{Riccati:sec}

In this section, we consider the following classical continuous-time  Riccati equation in high dimension $d$: 
\begin{align}
\dotex P=AP+PA^T+Q-PC^TN^{-1}CP,\label{Riccati}
\end{align}
with $P$, $A$ and $Q$ are $d\times d$ high dimensional matrices, $N$ is a $k\times k$  matrix and $C$ is of size $k \times d$. $P$, $Q$ and $N$ are  PSD matrices.

To store this covariance matrix $P$ in high dimension a first solution is to project it onto the low-rank manifold $\Sy$. %Let us approximate it in $\Sy$. 
To do so we replace $P$ with $Y=URU^T$ in the right-hand side \eqref{Riccati}, and we project the obtained matrix.  Prop.  \ref{Lubich:prop} yields\begin{equation}
\begin{aligned}
\dot U&=(I-UU^T)(AU+QUR^{-1})\\\dot R&=U^TAUR+RU^TA^TU+U^TQU-RU^TC^TN^{-1}CUR ,
\end{aligned}\end{equation}    termed low-rank Riccati  equation in   \cite{bris2012low}. This may be used as a low-rank proxy to the Kalman filter. The obtained filter then performs inference  in the low-dimensional subspace spanned by the dominant eigenvectors \cite{le2022low}.

To approximate \eqref{Riccati} in $\Sii$ instead,  we replace $P$ with $Y=URU^T+s(I-UU^T)$ in the right-hand side of \eqref{Riccati} and we project the obtained matrix using \eqref{matrices:PPCA}.  
\begin{prop}[PPCA-Riccati]\label{PPCA:ric:prop}
     The lift on $\Sii$ of the orthogonal projection of the vector field defined by  \eqref{Riccati} writes
\begin{align}
\dot s&= \frac{1}{d-p}\tr\bigl((I-UU^T)(2sA+Q-s^2C^TN^{-1}C)\bigr)\label{Ric1}\\
\dot U&= (I-UU^T)(AUR+QU+sA^TU -sC^TN^{-1}CUR)( R-sI)^{-1}\label{Ric2}\\
\dot R&= U^TAUR+RU^TA^TU+U^TQU-RU^TC^TN^{-1}CUR. \label{Ric3} 
\end{align}\end{prop}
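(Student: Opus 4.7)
The plan is to substitute $Y = URU^T + s(I-UU^T) = URU^T + s\Pi_U^\perp$ into the right-hand side of \eqref{Riccati} to form the symmetric matrix $H := AY + YA^T + Q - YC^TN^{-1}CY$, and then apply Proposition \ref{prop:PPCA} verbatim, reading off $\dot s$, $\dot U$, $\dot R$ from \eqref{matrices:PPCA}. The whole proof then reduces to simplifying $U^THU$, $\Pi_U^\perp HU$, and $\tr(H)-\tr(U^THU) = \tr(\Pi_U^\perp H)$ using two elementary identities established as preliminary: $YU = UR$ (since $U^TU = I_p$ and $\Pi_U^\perp U = 0$, so $URU^TU + s(I-UU^T)U = UR$) and, symmetrically, $\Pi_U^\perp Y = s\Pi_U^\perp$ (since $\Pi_U^\perp U = 0$ and $(\Pi_U^\perp)^2 = \Pi_U^\perp$). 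Taking transposes also yields $U^T Y = R U^T$ and $Y \Pi_U^\perp = s\Pi_U^\perp$.

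For \eqref{Ric3}, I would compute $\delta R = U^THU$ term by term: $U^T(AY)U = U^T A UR$ from $YU=UR$; $U^T(YA^T)U = RU^TA^TU$ from $U^TY = RU^T$; $U^TQU$ is itself; and $U^T(YC^TN^{-1}CY)U = RU^TC^TN^{-1}CUR$ since $U^T Y$ and $YU$ contract both sides. For \eqref{Ric2}, I apply $\Pi_U^\perp(\cdot)U(R-sI)^{-1}$ to each of the four terms of $H$: $\Pi_U^\perp(AY)U = \Pi_U^\perp AUR$; $\Pi_U^\perp(YA^T)U = \Pi_U^\perp(URU^T + s\Pi_U^\perp)A^TU = s\Pi_U^\perp A^T U$ (the $URU^T$ piece is killed by $\Pi_U^\perp$); $\Pi_U^\perp Q U$ stays; and $\Pi_U^\perp(YC^TN^{-1}CY)U = s\Pi_U^\perp C^TN^{-1}C\,UR$ using $\Pi_U^\perp Y = s\Pi_U^\perp$ on the left and $YU = UR$ on the right. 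Collecting the four yields \eqref{Ric2}. For \eqref{Ric1}, I use $\tr(\Pi_U^\perp H)/(d-p)$ and the cyclic property together with $U^T\Pi_U^\perp = 0$: every term containing an $URU^T$ piece of $Y$ vanishes under the trace with $\Pi_U^\perp$, so only the $s\Pi_U^\perp$ part of $Y$ survives, giving $\tr(\Pi_U^\perp AY) = s\tr(\Pi_U^\perp A)$, $\tr(\Pi_U^\perp YA^T) = s\tr(\Pi_U^\perp A)$, and $\tr(\Pi_U^\perp YC^TN^{-1}CY) = s^2\tr(\Pi_U^\perp C^TN^{-1}C)$, whose sum is \eqref{Ric1}.

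There is essentially no conceptual obstacle: the content of the proposition is entirely contained in Proposition \ref{prop:PPCA}, and the work is bookkeeping. The one thing to be careful about is which sub-terms of each expansion are annihilated by $\Pi_U^\perp$ (those proportional to $UU^T$) versus which survive (those proportional to $s\Pi_U^\perp$) — so tabulating the four factors $AY$, $YA^T$, $Q$, $YC^TN^{-1}CY$ in advance and recording their left- and right-contractions against $U$ and $\Pi_U^\perp$ is the cleanest way to organize the argument and avoid sign or factor errors in the $s$- versus $s^2$-terms of \eqref{Ric1}.
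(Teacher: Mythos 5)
Your proposal is correct and matches the paper's approach exactly: the paper itself gives no separate proof, stating only that one substitutes $Y=URU^T+s(I-UU^T)$ into the right-hand side of \eqref{Riccati} and applies the projection formulas \eqref{matrices:PPCA}, which is precisely your computation. Your identities $YU=UR$, $U^TY=RU^T$, $\Pi_U^\perp Y=Y\Pi_U^\perp=s\Pi_U^\perp$ and the resulting term-by-term simplifications of $U^THU$, $\Pi_U^\perp HU$ and $\tr(\Pi_U^\perp H)$ all check out, including the $2s$ and $s^2$ coefficients in \eqref{Ric1}.
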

We have the following interpretation: \eqref{Ric3} is the original Riccati equation \eqref{Riccati} where matrices are projected onto the subspace encoded by $U$, \eqref{Ric2} resembles the Oja flow~\cite{Oja} that tracks the dominant subspace of a symmetric matrix, and \eqref{Ric1} provides an adaptation of parameter $s$ to reflect the inflation of the covariance under the vector field in the subspace orthogonal to $\mathrm{span}(U)$. Note that, \eqref{Ric1}  offers guarantees that $s(t)$ remains positive at all times, since as long as $(I-UU^T)Q\neq 0$, we have $s=0\Rightarrow \dot s >0$. Thus the lifted equations \eqref{Ric1}-\eqref{Ric3} preserve  our PPCA form at all times.

In terms of implementation, we will need a numerical integration method for the obtained differential equations. With  $h$ the time step and   $\dot s(t)=\delta s,\dot U(t)=\delta U,\dot R(t)=\delta R$, one needs to define $s(t+h),U(t+h),R(t+h)$ so as $Y(t+h)\in\Sii$. This can be performed using a retraction, see \cite{Absil}. We let
\begin{align}
&s(t+h)\leftarrow s(t)+h\delta s \\
&U(t+h)\leftarrow \mathrm{retraction\_qr} (U(t),h\delta U)  \label{st:ret} \\
&R(t+h)\leftarrow R^{1/2}\exp(hR^{-1/2}\delta R R^{-1/2})R^{1/2},\label{exp:ret}
\end{align}
where \eqref{st:ret} is based on the retraction ``$\mathrm{retraction\_qr} $" of the Stiefel manifold from the  Manopt toolbox \cite{Manopt}, and \eqref{exp:ret} is a natural retraction on PD matrices \cite{Bonnabel}, and where ``$\exp$" denotes the usual matrix exponential, whose computation is tractable in the small dimension $p$.  $\Si$ is similarly treated.  Note that  alternative schemes have been proposed on $\Sy$, see \cite{Lubich, Rouchon, nonnenmacher2008dynamical}.   \cite{lubich2014projector} even proposes a numerical integration method  that avoids inverting the factor $R$, {see \cite{ceruti2020time} for recent developments.}

Regarding the FA approach, we can approximate \eqref{Riccati} in $\Si$ as follows. We write $H:=\dot P=AP+PA^T+Q-PSP$ with $S=C^TN^{-1}C$ and $P=URU^T+\psi$. After some calculations this yields\begin{prop} [FA-Riccati]
     The lift on $\Si$ of the orthogonal projection of the vector field defined by  \eqref{Riccati} writes
\begin{equation*} 
\begin{aligned}
&{\rm diag}(\dot \psi)=(I-2\bar D+\Upsilon \Upsilon^T)^{+}~{{\rm diag}}{((I-UU^T)M(I-UU^T))}\\
&\dot U=(I-UU^T)(M- \delta \psi)UR^{-1}+(I-UU^T)(AU-\psi S U)\\
&\dot R=U^T (M- \delta \psi) U-U^T\psi S U R-R U^T S \psi U-R U^T SUR +U^TAUR+RU^TA^TU, 
\end{aligned}
\end{equation*}
where $M=A \psi+\psi A^T+Q-\psi S\psi$, $S=C^TN^{-1}C$ and $\bar D,\Upsilon$ as in \eqref{min1}.    \end{prop}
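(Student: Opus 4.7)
The plan is to apply Proposition \ref{prop:FA}, together with its efficient reformulation in Lemma \ref{lemme} and Proposition \ref{prop:FA2}, to the right-hand side of the Riccati equation: set $H = AY + YA^T + Q - YSY$ at $Y = URU^T + \psi$ with $S = C^TN^{-1}C$, and then reduce the generic formulas for $\delta\psi$, $\delta U$, $\delta R$ to the claimed expressions by systematically exploiting the annihilation identities $U^T(I-UU^T) = 0$ and $(I-UU^T)U = 0$.

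For the $\dot\psi$ component, the central observation is that $(I-UU^T)Y = (I-UU^T)\psi$ and $Y(I-UU^T) = \psi(I-UU^T)$, so that sandwiching $H$ between the two projectors kills every term containing the $URU^T$ part of $Y$. Verifying this term by term on each of $AY$, $YA^T$, $Q$, $YSY$ yields the key identity $(I-UU^T)H(I-UU^T) = (I-UU^T)\,M\,(I-UU^T)$ with $M := A\psi + \psi A^T + Q - \psi S \psi$. Substituting this identity into the closed-form formula of Proposition \ref{prop:FA2}, and replacing the pseudoinverse $((I-UU^T)^{\circ 2})^+$ by its tractable equivalent $(I - 2\bar D + \Upsilon\Upsilon^T)^+$ from Lemma \ref{lemme}, immediately yields the announced expression for ${\rm diag}(\dot\psi)$.

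For $\delta R = U^T(H - \delta\psi)U$, I would expand $U^THU$ term by term using $U^TU = I_p$, which collapses $U^T(URU^T)=RU^T$ and $(URU^T)U=UR$; the resulting nine terms regroup into the ``$M$-piece'' $U^T(M-\delta\psi)U$ plus the four cross-terms $U^TAUR + RU^TA^TU - RU^TS\psi U - U^T\psi SUR$ and the pure low-rank piece $-RU^TSUR$. For $\delta U = (I-UU^T)(H-\delta\psi)UR^{-1}$, the same bookkeeping on $HU$ shows that every term beginning with $U$ is killed by $(I-UU^T)$, leaving $(I-UU^T)(AUR + A\psi U + \psi A^TU + QU - \psi S\psi U - \psi SUR - \delta\psi U)$; one then splits this into the $R$-bearing terms $AUR - \psi S UR$ and the remainder, and factors out $R^{-1}$.

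I expect no genuine analytical difficulty, only careful bookkeeping: the entire argument is expansion and cancellation of already-established identities. The only mildly non-trivial step is the specific regrouping of the nine terms in $\dot U$ into an ``$M$-piece'' plus an ``$(AU-\psi SU)R$-piece,'' which is a presentational choice made to parallel the pure low-rank Riccati equation given earlier in Section \ref{Riccati:sec} rather than a mathematical necessity.
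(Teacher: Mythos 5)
Your proposal is correct and follows exactly the route the paper intends: the paper gives no explicit proof (it says only ``after some calculations''), and the intended calculation is precisely yours --- substitute $Y=URU^T+\psi$ into $H=AY+YA^T+Q-YSY$, apply Proposition~\ref{prop:FA} together with the identification $(I-UU^T)^{\circ 2}=I-2\bar D+\Upsilon\Upsilon^T$ from Lemma~\ref{lemme}, and use $(I-UU^T)Y=(I-UU^T)\psi$ and $U^TU=I_p$ to collapse the terms. Your key identity $(I-UU^T)H(I-UU^T)=(I-UU^T)M(I-UU^T)$ and the regroupings for $\dot U$ and $\dot R$ all check out against the stated formulas.
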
The pseudo-inversion may be   avoided when $p\ll d$  using the Woodbury formula as in equation \eqref{argmin2}, leading to an update having a memory cost linear in $d$ if the order of operations is taken carefully.

 We now propose  two applications for the equations just obtained, one in the field of computational statistics where one seeks to approximate a fixed posterior distribution, for which we provide a tractable approximation in high dimension through a Riccati flow, and one in the field of Kalman filtering, with applications to robotics. The former is essentially an illustration of the applicability of the method, while the latter aims at comparing the various approaches through numerical simulations. 

\subsection{Application to computational statistics}\label{Optim:sec}
We seek to approximate a distribution in high dimension which is the stationary solution  of a Langevin equation. In this context, Markov-Chain Monte Carlo (MCMC) has been extensively studied in statistical physics and machine learning \cite{Andrieu2003}. Recently, variational inference \cite{lambertNIPS} has been used in this framework as an alternative form to MCMC to approximate the stationary distribution with a Gaussian distribution. The final approximation is defined as the asymptotic limit of a flow over the set of Gaussian distributions, parameterized by their mean and  covariance matrix. The flow for the covariance matrix has the form of a Riccati equation.   To illustrate the applicability of the methods developed in the present paper, we now show that the PPCA projection can be directly applied to these flows to provide a memory-efficient and computationally tractable algorithm in high dimension. 

\subsubsection{Wasserstein gradient flow for Gaussian variational inference}
In Bayesian statistics, one is often faced with the problem of approximating a target posterior distribution $\pi$ that is known up to a normalizing constant, that is, we have access to $ \nabla V(x)$ where $\pi(x)\propto \exp(-V(x)/\epsilon)$. To compute a Gaussian approximation of $\pi$ we may proceed as follows. The following stochastic differential equation (Langevin dynamics): 
  \begin{align}
 dx_t=-\nabla V(x_t)dt+\sqrt{2\varepsilon} dB_t,
  \end{align}
  is such that, under suitable assumptions, the marginal distribution of $x_t$ satisfies    the Fokker-Planck (partial differential) equation:
  \begin{align}
\frac{\partial p_t}{\partial t}= {\rm div}(Vp_t)+ \varepsilon \Delta p_t,
  \end{align}
  which has stationary distribution $\pi$. Thus, if one can approximate the solution to the latter equation by a Gaussian density, one may hope the latter tends asymptotically to a Gaussian approximation of $\pi.$
In the recent work \cite{lambertNIPS}, the solution of this PDE has been approximated with a Gaussian $p_t \approx q_t=\mathcal{N}(\mu_t,P_t)$ using variational inference. The following Gaussian flow: 
  \begin{align}
    \begin{aligned}
 &\dot \mu_t= -\mathbb{E}_{X\sim \mathcal{N}(\mu_t,P_t)}[\nabla V(X)],\\ 
 &\dot P_t=A_tP_t+P_tA^T_t+Q,\\
 & Q:=2\varepsilon I; \quad A_t:=-\mathbb{E}_{X \sim\mathcal{N}(\mu_t,P_t)}[\nabla^2 V(X)], 
     \end{aligned}  \label{covFlow}
 \end{align}
is shown to converge with   exponential rate  to the Gaussian distribution being the closest to the target distribution $\pi$ (in the sense of KL divergence), if $V$ is strongly convex, see  \cite[Appendix A] {lambertNIPS}. For large-scale problems, the differential equation on $P_t$ in equation \eqref{covFlow} is   problematic   because it requires storing at each time a $d \times d$ matrix. Moreover, the computation cost is (roughly) cubic in the dimension due to matrix products, even if we overlook the difficulty of computing the required expectations. This motivates a low-rank plus diagonal approximation to this Riccati-like equation. We readily see the   Wasserstein gradient flow \eqref{covFlow} lends itself to the present low-rank approximation framework, as it is of the form \eqref{Riccati}, provided that the expectations which define  $A_t$ may be computed, and that they can be written in a factorized form compatible with the high dimension.  

 \subsubsection{Particular case of a Gaussian target}

To fix ideas, let us see what the equations boil down to in the case where the target is Gaussian, that is,  $\pi \sim \mathcal{N}(m,\epsilon M )$, to be consistent with $\pi\propto \exp(-V/\epsilon)$ so that $V(x)=\frac{1}{2}(x-m)^TM^{-1}(x-m)$. In this case, the expectations can be computed analytically and the variational Gaussian flow  \eqref{covFlow} writes:
 \begin{align*}
& \dot \mu_t=-M^{-1}(\mu_t-m)\\
&\dot P_t=2\varepsilon I-M^{-1}P_t-P_tM^{-1}.
 \end{align*}
 The equations have as stationary point $(\mu,P)=(m,\epsilon M)$ which are the parameters of $\pi$, as expected. 

\begin{rem}
We see 
the ``pure" low-rank projection, defined by equation \eqref{Delta}, of the covariance $P_t$ writes:
\[\dot  U=-(I-UU^T)M^{-1}U,\]
and we recover minus the Oja flow \cite{Oja}, that is, the flow in a form that tracks the   eigenvectors corresponding to the smallest eigenvalues of $M^{-1}$, thus, the dominant eigenspace of $M$.     
\end{rem}

\subsubsection{PPCA approximation in the large-scale case}

We now come back to a general target posterior $\pi\propto\exp(-V/\epsilon)$ and seek to approximate the Gaussian flow \eqref{covFlow} in a tractable form with high $d$. The first step is to approximate  the expectation  under the Gaussian distribution appearing in the Gaussian flow \eqref{covFlow} with $K$  Monte-Carlo  samples. To this aim, we may let appear an outer product of the form:
   \begin{align*}
 &\mathbb{E}_{  \mathcal{N}(\mu_t,P_t)}[\nabla^2 V(X)]P_t=\mathbb{E}_{  \mathcal{N}(\mu_t,P_t)}[\nabla V(X) (X-\mu_t)^T] \label{Exp}\\
 &\approx \frac{1}{K} \sum_{k=1}^K \nabla V(x^k_t) (x^k_t-\mu_t)^T = D_tB_t^T \quad \text{where} \quad x^k_t \sim q_t,\\
 &D_t:=\frac{1}{\sqrt{K}}  \left(  \nabla V(x^1_t), \dots , \nabla V(x^N_t)  \right) \quad B_t:=\frac{1}{\sqrt{K}}   \left(x^1_t-\mu_t, \dots , x^N_t-\mu_t  \right), 
 \end{align*}
where the first equality comes from integration by parts (Stein Lemma \cite{Lin2019}). We now only need to store the matrices $B_t$ and $D_t$ of size $d \times K$. In practice, the sampling is done after the discretization of these ODEs at each integration step. 

As $D_t$ and $B_t$ depend  on the samples $x_t^k$ generated from the distribution $q_t \sim \mathcal{N}(\mu_t,P_t)$, where in our PPCA approximation we let $P_t= URU^T+s(I-UU^T) $, we need to be able to sample from a Gaussian distribution having such a factorized covariance matrix. A simple way to do so is as follows. We first note that  if we take $Z_1\sim\mathcal N(0,URU^T)$ and $Z_1\sim\mathcal N(0,s(I-UU^T))$ independent, then $Z_1+Z_2$ has the desired covariance. To sample $Z_1$, one may sample a small-dimensional variable $Z_3\sim \mathcal N(0,R)$ and let $Z_1=UZ_3$. To sample $Z_2$, we may sample $Z_4\sim \mathcal N(0,s I)$ and let $Z_2=(I-UU^T)Z_4.$

 The ODE we seek to approximate  now takes the form:
 \begin{align}
 &\dot{P_t}=2\varepsilon I-(D_t B_t^T + B_t D_t^T).
 \end{align}
 
We   project this equation onto the PPCA subset. This is achieved thanks to Proposition \ref{PPCA:ric:prop}, and it yields
\begin{align*}
\dot s&= \frac{1}{d-p}\tr\bigl(2(I-UU^T)(\varepsilon I-D_t B_t^T )\bigr),\\
\dot U&= (I-UU^T)(2\varepsilon I-D_t B_t^T - B_t D_t^T)U( R-sI)^{-1},\\
\dot R&= U^T(2\varepsilon I-D_t B_t^T - B_t D_t^T)U.  
\end{align*}

These operations can be computed in a memory-efficient way using the relation $\tr D_tB_t^T=\sum_{i=1}^d D_t[i,:] B_t[:,i]$ and $U^T D_tB_t^T U=(U^TD_t) (U^TB_t)^T$  to only manipulate vectors or matrices of size $p \times K$. 

We have briefly shown that using the results of the present paper, the recent Wasserstein gradient flow of \cite{lambertNIPS} for variational inference is amenable to a PPCA approximation being compatible with the high dimension. We now turn to another application and compare numerically  the various low-rank approaches.

\subsection{Application to Kalman filtering} \label{appliKalman}

In this section, we consider the more standard problem of the Kalman filter in continuous time known as the Kalman-Bucy filter. In  Kalman filtering, we seek to estimate hidden physical quantities that evolve over time and which are partially observed  through a linear model. The covariance of the state $P$ satisfies the Riccati equation \eqref{Riccati}.

To assess the various low-rank approximations, we consider an example inspired by robotics. We consider a swarm of $d/2$ agents in the 2D plane, each equipped with  motion sensors, and governed by the following dynamics
\begin{align}
\forall~ 1\leq i\leq  {d}/{2}\qquad \dotex  X_i=u_i(t) +w_i(t),    
\end{align} where $X_i\in\RR^2$ denotes the position of agent $i$,   $w_i(t)$ is a white noise, and $u_i(t)$   a control input.  
Measurements consist of relative position between some agents being neighbors in a visibility graph, corrupted by noise $v_{ij}(t)$, i.e.,
\begin{align}
    y_{ij}(t)=X_j(t)-X_i(t)+v_{ij}(t),\quad {(i,j)\in\Gamma}.    
\end{align}

Moreover, there is a ``queen", say, agent 1, having relatively more computational capacity onboard--albeit limited--and which is equipped with a GPS, that is, we also measure $y_1(t)=X_1(t)+v_1(t)$.  The queen receives the dynamical motions $u_i(t)$ as well as the measurements $y_{ij}(t)$, and estimates the state of the whole swarm onboard, through a low-rank filter compatible with its modest computational capabilities.

We integrate the Riccati equation  with an Euler scheme (with step $0.01s$) during $10s$ and consider an initial covariance matrix  factorized as  purely low-rank $P_0=URU^T$ with $R=2 {I}_p$ and $U$ a  random Stiefel matrix, to allow for a common starting point. We set noise covariance to be $N=2I$ and   process noise covariance  $Q=D$ where $D$ is a  diagonal matrix whose diagonal consists of positive values 
 dispersed around 1, reflecting discrepancies in the accuracy of motion sensors of the agents.  In our experiments, each agent sees one other  agent (that might be the queen), randomly picked at the beginning.  We   tested changing the visibility graph over time but that does not change the nature of the following results.

 We compare the low-rank approximation \cite{Lubich,Rouchon} with the two variants of our low-rank + diagonal approximation in dimension $d=200$, that is, 100 agents, and a latent dimension $p=8$ or $p=50$ in Figure \ref{figureXP1}. 
We clearly observe the  proposed approximations outperform the former in terms of distance to the true covariance matrix. Moreover, the order of the curves is as expected: Projections onto an increasing sequence of submanifolds yield 
 in turn increasing accuracy  for the matrix differential equation approximation.

\begin{figure}[!ht]
\includegraphics[scale=0.9]{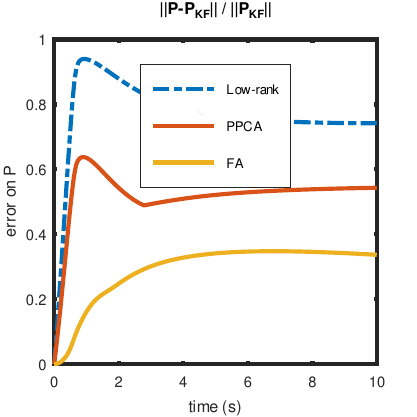}
\includegraphics[scale=0.9]{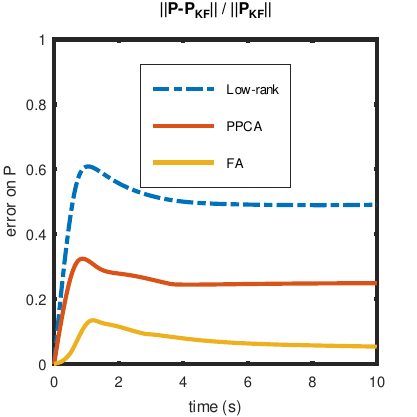}
\caption{$d=200$ with $p=8$ (left) and $p=50$ (right). Normalized distance between the covariance matrix computed from the true full-rank Riccati equation and the ones computed from the low-rank, low-rank + diagonal (FA) and low-rank + isotropic diagonal (PPCA) for the swarm  example.  } 
\label{figureXP1}
\end{figure}

 To assess the effect of approximating the Riccati equation on the Kalman filter's state estimates, we have compared the full-rank KF's optimal  estimates with  those obtained by its computationally cheaper variants, for  a randomly distributed initial error,  in a noise-free setting, that is, when using the Kalman filter as an observer, to get more legible curves.
The results are given in Figure \ref{figureXP2}. We see the deviation to the optimal estimates is much more contained when adding a diagonal matrix.

\begin{figure}[!ht]
\includegraphics[scale=.9]{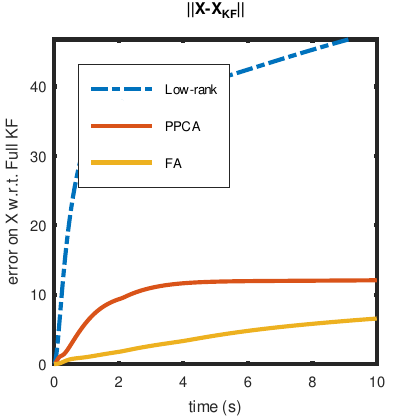}
\includegraphics[scale=.9]{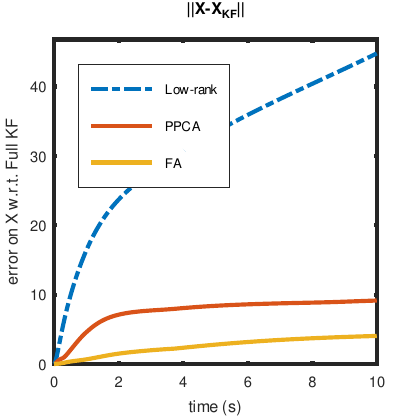}
\caption{$d=200$ with $p=8$ (left) and $p=50$ (right). Norm of the error over time between the filters' state estimates and the full KF's optimal estimate, for a randomly picked initial error in a noise-free setting. } 
\label{figureXP2}
\end{figure}
It  is striking to see the PPCA method with $p=8$ competes with the low-rank method of \cite{Rouchon} (arxiv preprint version) with $p=50$  in terms of covariance matrix approximation, at the expense of only 1 more scalar parameter. We also observe  PPCA with $p=8$ outperforms low-rank with $p=50$ in terms of state estimate accuracy. More generally, the experiments show that adding a diagonal term provides an efficient alternative to increasing the rank.

 Two remarks are in order. First, we note that, although the curves indicate a clear improvement of FA over PPCA, this is in fact largely due to the choice of anisotropic process noise, reflecting a  
 discrepancy in the motion sensors' accuracy. When making the problem more ``isotropic", the differences between the two   diminish. 
Then, we  also  observed  that the initial condition $P(0)$ plays an important role. Although we took $P(0)\in\Sy$ to allow for a common starting point, we noted that departing from $P(0)\in\Sii$ naturally reduces  the gap between FA and  PPCA.

\subsubsection{Source code}
 {The code is available on Github for Octave or Matlab at } \url{https://github.com/marc-h-lambert/Riccati-PPCA}. It provides  projections onto   $\mathcal T\Sy$, $\mathcal T\Sii$ and $\mathcal T\Si$ of matrices of the form $H=GG^T$, or diagonal, or a linear combination, and scripts to redo the numerical   experiments.

\subsubsection{Interest of invertible approximations in Kalman filtering}
We conclude this section with a discussion about the relevance of low-rank approximations in Kalman filtering.   To illustrate the shortcomings of purely low-rank based methods, e.g.,    \cite{Rouchon,Pham,bonnabel2013geometry,Lubich,le2022low} for Kalman filtering, we consider a tutorial case, see Example 6.2.10 of \cite{oksendal2013stochastic}. 
Consider  noisy observations of a Brownian motion  in a large dimension 
$$dX=dw,\quad dY=dX+dv,\quad P(t_0)=\bar P,$$with $w,v$ Wiener process noises with  covariance matrices $Q=\lambda I,~N=\nu I$.  Assume  one wants to filter the noise  out.

\paragraph{Low-rank KF}The low-rank Riccati equation of \cite{Bonnabel,Rouchon,le2022low} writes
$$\dot U=0,\quad \dot R=U^TQU-RU^TN^{-1}UR$$as we have $\dot U=(I-UU^T)QUR^{-1}=\lambda (I-UU^T)UR^{-1}$. As a result, $R$ stabilizes to the steady-state solution $\sqrt{ {\lambda\nu}}I_p $. Let us see how the corresponding steady-state Kalman filter updates the state. The static Kalman gain writes $K=P_\infty C^TN^{-1}=\sqrt{{\lambda}/{\nu}}~U_0U_0^T$ where $U_0$ is the initial value of $U$. The steady-state KF equations are 
\begin{align*}
    d\hat X&=K(dY-\hat X dt)=\sqrt{ {\lambda/\nu}}~UU^T  (dY- \hat X dt).
\end{align*}
We see that $P$ is a degenerate low-rank matrix and the innovation vector $dY- \hat X dt$ is always projected onto the same subspace by the operator $UU^T$.  

\paragraph{Low-rank plus diagonal KF in PPCA form}By contrast,  \eqref{Ric1}-\eqref{Ric3} write:
$$\dot U=0, \quad \dot R=\lambda I_p-\frac{1}{\nu} R^2, \quad \dot s= \lambda -\frac{s^2}{\nu}.$$
The steady-state solution gives $R=\sqrt{\lambda \nu} I_p$ and $s=\sqrt{\lambda \nu}$.
The associated KF equations write now:
\begin{align*}
    P&=URU^T+s(I-UU^T)=\sqrt{ {\lambda\nu}}I \\
    d\hat X&=K(dY-\hat X dt)=\sqrt{ {\lambda/ \nu}} ~ (dY- \hat X dt).
\end{align*}
In this particular example, we see we recover  the equations of the full-rank steady-state Kalman filter. 

 \begin{prop}
 Using the low-rank KF, the estimation error incurred grows unbounded  $E(||X-\hat X||^2)\to +\infty$. By contrast,   using our low-rank plus diagonal approximation,  $E(||X-\hat X||^2)\to \sqrt{ {\lambda/ \nu}}d$. Hence, the error is here optimally contained. 
 \end{prop}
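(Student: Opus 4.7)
The plan is to analyze the error process $e(t) := X(t) - \hat X(t)$ as an explicit SDE in each case, reading the informal observation model $dY = dX + dv$ as the Kalman--Bucy form $dY = X\,dt + dv$, which is consistent with the innovation $dY - \hat X\,dt$ appearing in both filter equations stated just before the proposition.

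\textbf{Low-rank filter.} Here $K = \sqrt{\lambda/\nu}\, U_0 U_0^T$, whose range equals $\mathrm{span}(U_0)$. Subtracting $d\hat X = K(dY - \hat X\,dt)$ from $dX = dw$ and substituting the observation yields the linear SDE $de = -K e\,dt + dw - K\,dv$. Applying the orthogonal projector $\Pi^\perp := I - U_0 U_0^T$, which satisfies $\Pi^\perp K = 0$, this collapses to $d(\Pi^\perp e) = \Pi^\perp\,dw$, a driftless Brownian motion with intensity $\lambda\,\Pi^\perp$. A direct It\^o isometry computation then gives $E\|\Pi^\perp e(t)\|^2 = \lambda(d-p)\,t + E\|\Pi^\perp e(0)\|^2$, which diverges as $t \to \infty$. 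Since $\|e\|^2 \ge \|\Pi^\perp e\|^2$, the total mean-square error blows up.

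\textbf{PPCA filter.} From the computation preceding the proposition, the approximated Riccati reaches steady state $P_\infty = \sqrt{\lambda\nu}\,I$ with $K = \sqrt{\lambda/\nu}\,I$. The error SDE becomes $de = -\sqrt{\lambda/\nu}\,e\,dt + dw - \sqrt{\lambda/\nu}\,dv$, a vector Ornstein--Uhlenbeck process that decouples across coordinates. Its stationary covariance $\Sigma_\infty$ solves the Lyapunov equation $2\sqrt{\lambda/\nu}\,\Sigma_\infty = \lambda I + (\lambda/\nu)\,\nu I = 2\lambda I$, giving $\Sigma_\infty = \sqrt{\lambda\nu}\,I$. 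Taking the trace yields $E\|e\|^2 \to \sqrt{\lambda\nu}\,d$, matching the exact full-rank Kalman filter's asymptotic error and hence optimally contained (this suggests a typo in the stated formula, which reads $\sqrt{\lambda/\nu}\,d$).

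The only conceptual subtlety is reconciling the informal SDE $dY = dX + dv$ with the Kalman--Bucy innovation form actually driving $\hat X$; once this is pinned down, both halves reduce to standard stochastic-calculus manipulations, namely an It\^o isometry on a projected Brownian motion in the first case and a scalar Lyapunov equation applied in each of the $d$ decoupled coordinates in the second. I do not foresee a genuine obstacle beyond this bookkeeping, and the contrast between the two cases is structural: the low-rank gain has a nontrivial kernel orthogonal to $U_0$ along which the observations never correct the estimate, whereas the PPCA gain is a positive multiple of the identity and corrects every direction.
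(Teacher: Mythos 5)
Your proof is correct and follows essentially the same route as the paper's: project the error onto $\mathrm{span}(U_0)^\perp$ to expose a driftless Brownian motion (hence linear growth of the mean-square error) in the low-rank case, and identify the error as an Ornstein--Uhlenbeck process in the PPCA case. Your explicit Lyapunov computation also correctly flags a typo in the statement: the stationary mean-square error is $\sqrt{\lambda\nu}\,d=\tr(P_\infty)$, not $\sqrt{\lambda/\nu}\,d$.
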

 \begin{proof}
     For the low-rank KF we have $d[(I-UU^T)(X-\hat X)]=(I-UU^T)dw$. Thus $\dotex E(||X-\hat X||^2)\geq (d-p)\lambda$.  By contrast, for the proposed KF, the estimation error is an Ornstein-Uhlenbeck process whose norm stabilizes at $\sqrt{ {\lambda/ \nu}}d$.
 \end{proof}Contrary to the   robotics example, the latter problem obviously lends itself to the proposed approximation, owing to $C,Q,N$ being diagonal and isotropic, leading to a diagonal and isotropic (stationary) covariance that is fully recovered on $\Sii$. However, it clearly illustrates the interest of maintaining an invertible covariance matrix in Kalman filtering.   
 \section{Conclusion}

We have derived the orthogonal projection of any tangent vector to the set of PSD matrices onto the tangent space to low-rank plus diagonal PSD matrices. This allows for an attractive  alternative to previous low-rank approximations of differential equations defined on the set of PSD matrices, in that it retains their computational efficiency while allowing for more flexible, full-rank, approximations, leading to better results for large-scale filtering.   Our conclusion is as follows:
\begin{enumerate}
    \item Existing (pure) low-rank PSD approximations have the shortcoming of having many null eigenvalues, which comes with consequences. If they model a Gaussian covariance matrix, we cannot compute the precision matrix, and there is no density. In Kalman filtering, this results in overconfidence, 
 potentially leading to large error drift in turn.
    \item The proposed PPCA approximation is about as efficient in terms of memory and computational cost while having none of those drawbacks. We recommend its systematic use. 
    \item The proposed FA  approximation additionally closely  captures the diagonal elements. However, it comes at a greater price, albeit retaining linearity in $d$. Its use shall be reserved for problems known to have a strongly non-isotropic diagonal, which  one wants to capture well. 
\end{enumerate}
{As a perspective, we see that while PPCA is computationally
efficient, it doesn't reach the error accuracies achieved by the FA format. It could prove useful to seek a
practical criterion for switching between  these two formats. 
 Besides, a criterion for choosing the rank $p$ (or adapting it dynamically over time) may also prove useful. A first step to address those perspectives is to observe that the approximation error $\norm{H-P(H)}^2$  can   be computed at all times with a cost being linear in $d$ too, allowing for approximation quality assessment, as shown in  Appendix \ref{AppendixB}. An other interesting perspective would be to explore how one could bring the proposed method  to bear on the dynamic factor analysis  problem, see e.g.,  \cite{falconi2023robust}.}
 
 \section*{Acknowledgments}
This work was funded by the French Defence procurement agency (DGA) and by the French government under the management of Agence Nationale de la Recherche as part of the “Investissements d’avenir” program, reference ANR-19-P3IA-0001(PRAIRIE 3IA Institute). We also acknowledge support from the European Research Council (grant SEQUOIA 724063).

We thank Pierre-Antoine Absil and Guillaume Olikier for enlightening discussions about the submanifold structure of $\Sy$. We also thank the anonymous reviewers for their suggestions. 
 
\bibliographystyle{plain}
\bibliography{CDC23}

\appendix
\section{Proof of Lemma \ref{lemme}} \label{AppendixA}
 
Letting $\delta \psi_1,\cdots,\delta \psi_d$ denote the diagonal elements of $\delta \psi$,  the cost \eqref{cost0} writes
\begin{align*}\bar C(\delta \psi)&=\sum_{i=1}^n(h_{ii}-\delta \psi_i)^2+4\tr(UU^TH\delta \psi)-2\tr(UU^T\delta \psi^2)\\&\hspace{1cm}-2\tr\bigl(U^THUU^T\delta \psi U\bigr)+\tr\bigl(U^T\delta \psi UU^T\delta \psi U\bigr)\\ &:=\textcircled{1}+\textcircled{2}+\textcircled{3}+\textcircled{4}+\textcircled{5}.\end{align*}
Let us analyze each term, and write them as functions of  $\overline{\delta \psi}:=(\delta \psi_1,\cdots,\delta \psi_d)^T={\rm diag}(\delta \psi)$.   
\begin{align*}
\textcircled{1}&=\sum_{i=1}^n(h_{ii}-\delta \psi_i)^2=(\bar h-\overline{\delta \psi})^T(\bar h-\overline{\delta \psi}), \quad \bar h:={\rm diag}(H)
\\
\textcircled{2}&=4\tr(UU^TH\delta \psi) =4\bar h_U^T\overline{\delta \psi}, \quad \bar h_U:={\rm diag}(UU^TH)\\
\textcircled{3}&=-2\tr(UU^T\delta \psi^2)=-2\sum_{k=1}^d\delta \psi_k^2(\sum_{j=1}^rU_{kj}^2)=-2\overline{\delta \psi}^T\bar D\overline{\delta \psi}
\end{align*}
where we let $\bar D$ be the diagonal matrix defined by $\bar D_{ii}=\sum_{j=1}^rU_{ij}^2$. As concerns the fourth term we have\begin{align*}\textcircled{4}&= -2\tr\bigl(U^THUU^T\delta \psi U\bigr) \\&=-2 \sum_{1\leq i,j\leq r} (U^THU)_{ij}(\sum_{k=1}^d\delta \psi_kU_{ki}U_{kj})\\&=-2\sum_{k=1}^d\delta \psi_k \sum_{1\leq i,j\leq r} (U^THU)_{ij} U_{ki}U_{kj}\\&=-2\Lambda^T\overline{\delta \psi},\quad 
 \Lambda=(\Lambda_1,\cdots,\Lambda_d)^T\in\RR^d\end{align*}
with  $\Lambda_k=\sum_{1\leq i,j\leq r} (U^THU)_{ij} U_{ki}U_{kj}$ for each $k$. Finally
\begin{align*} 
\textcircled{5}&=  \tr\bigl(U^T\delta \psi UU^T\delta \psi U\bigr)=  \sum_{1\leq i,j\leq r} (\sum_{k=1}^d\delta \psi_kU_{ki}U_{kj})^2 \\
&= \sum_{1\leq i,j\leq r}(\Gamma_{ij}^T\overline{\delta \psi})^2,
\end{align*}
where each $\Gamma_{ij}\in\RR^d$ is a vector whose $k$-th component is $U_{ki}U_{kj}$. Thus we seek to minimize
\begin{align*} 
&\bar C(\overline{\delta \psi})=(\bar h-\overline{\delta \psi})^T(\bar h-\overline{\delta \psi}) 
 +4\bar h_U^T\overline{\delta \psi}-2\overline{\delta \psi}^T \bar D\overline{\delta \psi}- 2\Lambda^T\overline{\delta \psi}\\
 &\hspace{6 cm}+\sum_{1\leq i,j\leq r}(\Gamma_{ij}^T\overline{\delta \psi})^2.
 \end{align*}
 Thus
\begin{align*}
& \nabla_{\overline{\delta \psi}}\bar  C(\overline{\delta \psi})=2(\overline{\delta \psi}-\bar h)+ 4\bar h_U-4\bar D\overline{\delta \psi} -2\Lambda+2\sum_{1\leq i,j\leq r}\Gamma_{ij}\Gamma_{ij}^T\overline{\delta \psi}\\
&\Rightarrow \frac{1}{2}\nabla_{\overline{\delta \psi}}\bar  C(\overline{\delta \psi})=\bigl(I-2\bar D+\sum_{1\leq i,j\leq r}\Gamma_{ij}\Gamma_{ij}^T\bigr)\overline{\delta \psi}-\Lambda+2\bar h_U-\bar h.
\end{align*}
 And the optimizer is given by
 $$
 \overline{\delta \psi}^*=\bigl(I-2\bar D+\sum_{1\leq i,j\leq r}\Gamma_{ij}\Gamma_{ij}^T\bigr)^{-1}\bigl(\bar h-2\bar h_U+\Lambda\bigr).
 $$
Finally, we note that $\sum_{1\leq i,j\leq r}\Gamma_{ij}\Gamma_{ij}^T=\Upsilon \Upsilon^T$ where $\Upsilon$ is a $d\times p^2$ matrix whose columns consist of  the $p^2$ vectors $ \Gamma_{ij}$.  To avoid double counting we may take for $\Upsilon$  a matrix of size $d\times \frac{p(p+1)}{2}$ whose first  $\frac{p(p-1)}{2}$ columns are given by the vectors $\sqrt{2}\Gamma_{ij}$, $i<j$, and whose last $r$ columns are the  $\Gamma_{ii}$'s.

\section{Monitoring the quality of approximation}\label{AppendixB}
 
An advantage  of the method is that one may monitor the quality of the approximation by computing the discrepancy $\norm{H-P(H)}^2$ at all times, with a cost being linear in $d$. This opens up for methods that could dynamically adapt the parameter $p$, or switch between PPCA and FA. Indeed, have the following result.

\begin{prop}In the PPCA case, we have$$  \norm{H-P_{U,R,s}(H)}^2=\tr\bigl ((I-UU^T) [H-\delta s I]^T [H-\delta s I] \bigr ).$$This may be compared to $\norm{H}^2= \norm{H-P(H)}^2+\norm{P(H)}^2$  to assess the quality of the approximation.\end{prop}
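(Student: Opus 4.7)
The plan is to compute $H - P_{U,R,s}(H)$ explicitly, factor it compactly in terms of the projector $\Pi_U^\perp = I-UU^T$, and then evaluate the resulting squared Frobenius norm by a short trace manipulation.

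First I would use the closed-form expression from Proposition \ref{prop:PPCA}, namely $P_{U,R,s}(H) = HUU^T + UU^TH - UU^THUU^T + \delta s\,(I-UU^T)$, and observe that the first three terms on the right equal $H - \Pi_U^\perp H \Pi_U^\perp$, since a direct expansion gives $\Pi_U^\perp H \Pi_U^\perp = (I-UU^T)H(I-UU^T) = H - HUU^T - UU^TH + UU^THUU^T$. Hence $H - P_{U,R,s}(H) = \Pi_U^\perp H \Pi_U^\perp - \delta s\,\Pi_U^\perp$. Next, using idempotency $(\Pi_U^\perp)^2 = \Pi_U^\perp$, rewrite $\delta s\,\Pi_U^\perp = \Pi_U^\perp(\delta s\, I)\Pi_U^\perp$; this yields the compact factorization $H - P_{U,R,s}(H) = \Pi_U^\perp (H-\delta s\, I)\Pi_U^\perp$.

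The second step is to square in Frobenius norm. Because the residual is symmetric, $\norm{\cdot}^2 = \tr((\cdot)^2)$; combining this with the cyclic property of the trace and $(\Pi_U^\perp)^2=\Pi_U^\perp$, the squared norm collapses to $\norm{H-P_{U,R,s}(H)}^2 = \tr\bigl(\Pi_U^\perp(H-\delta s\, I)\Pi_U^\perp(H-\delta s\, I)\bigr)$, which is the announced identity (the inner $\Pi_U^\perp$ being absorbed into the outer projector in the notation of the statement, as $(H-\delta s I)$ is symmetric).

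I do not foresee a real obstacle here: the whole argument is an exercise in bookkeeping with an idempotent symmetric projector, and the crucial observation is only that the affine offset $\delta s\, I$ can be sandwiched inside the two $\Pi_U^\perp$'s. A more conceptual alternative is to bypass the explicit formula for $P_{U,R,s}(H)$ and instead reuse the computation inside the proof of Proposition \ref{prop:PPCA}: that proof already shows $\min_{\delta U,\delta R} J_{\tilde H;U,\tilde R}(\delta U,\delta R) = \tr(\Pi_U^\perp \tilde H \Pi_U^\perp \tilde H)$ with $\tilde H = H - \delta s\,\Pi_U^\perp$, and since $\Pi_U^\perp U = 0$ one has $\Pi_U^\perp \tilde H = \Pi_U^\perp(H - \delta s\, I)$, leading immediately to the claimed formula. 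The companion decomposition $\norm{H}^2 = \norm{H-P(H)}^2 + \norm{P(H)}^2$ that the statement alludes to is then automatic, being nothing but the Pythagorean identity for the orthogonal projection onto the tangent space in the Frobenius inner product.
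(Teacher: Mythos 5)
Your computation of the residual is correct and is essentially the route the paper intends (the proposition is an immediate consequence of the proof of Proposition \ref{prop:PPCA}, where the minimum is $\tr\bigl(\Pi_U^\perp\tilde H\Pi_U^\perp\tilde H\bigr)$ with $\Pi_U^\perp\tilde H=\Pi_U^\perp(H-\delta s I)$): indeed $H-P_{U,R,s}(H)=\Pi_U^\perp(H-\delta s I)\Pi_U^\perp$, and hence $\norm{H-P_{U,R,s}(H)}^2=\tr\bigl(\Pi_U^\perp(H-\delta s I)\Pi_U^\perp(H-\delta s I)\bigr)$. The gap is in your very last step: the claim that ``the inner $\Pi_U^\perp$ can be absorbed into the outer projector'' is false. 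For an idempotent $\Pi$ and symmetric $A$ one has $\tr(\Pi A^2)=\tr(\Pi A\Pi A)+\tr(\Pi A(I-\Pi)A)$, and the cross term equals $\norm{(I-\Pi)A\Pi}^2$, which is not zero in general. Here, with $A=H-\delta s I$ and $\Pi=\Pi_U^\perp$, the discrepancy between the two-projector expression you correctly derived and the one-projector expression in the statement is exactly $\norm{\Pi_U^\perp HU}^2=\norm{(I-UU^T)HU}^2$ (using $\Pi_U^\perp U=0$), which vanishes only when $\mathrm{span}(U)$ is $H$-invariant. A concrete check: $d=2$, $p=1$, $U=e_1$, $H=\bigl(\begin{smallmatrix}0&1\\1&0\end{smallmatrix}\bigr)$ gives $\delta s=0$, $H-P(H)=0$, yet $\tr\bigl((I-UU^T)H^TH\bigr)=1$.

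What your derivation actually shows is that the formula as printed is missing the inner projector; the correct identity is $\norm{H-P_{U,R,s}(H)}^2=\tr\bigl((I-UU^T)[H-\delta s I](I-UU^T)[H-\delta s I]\bigr)$, which still costs $O(dp^2)$ to evaluate (compute $HU$, $U^THU$, and the relevant traces) so the point of the appendix is unaffected. You should state this corrected formula rather than reconcile your (correct) computation with the printed one by an invalid trace manipulation. Your closing remark on the Pythagorean identity $\norm{H}^2=\norm{H-P(H)}^2+\norm{P(H)}^2$ is fine, since $P$ is an orthogonal projection onto a linear subspace; note in passing that orthogonality, together with $\tr\bigl(\Pi_U^\perp(H-\delta s I)\Pi_U^\perp\bigr)=0$ (which follows from the definition of $\delta s$), gives the equivalent expression $\tr\bigl((H-P(H))H\bigr)$ for the residual norm.
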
 
As for the FA case, we have advantageously rewritten $\norm{H-P_{U,R,\psi}(H)}^2$ as  the vector cost \eqref{barcost}, allowing for the following result.

\begin{prop}In the FA case, $  \norm{H-P_{U,R, \psi}(H)}^2$ may be computed as  \eqref{barcost} letting $$\alpha= \tr\bigl(  H^2\bigr)-2\tr\bigl(UU^T  H^2\bigr)+\tr\bigl(UU^T  H UU^T H\bigr)-\bar h^T\bar h.$$This may be compared to $\norm{H}^2= \norm{H-P(H)}^2+\norm{P(H)}^2$  to assess the quality of the approximation.\end{prop}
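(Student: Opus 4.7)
The plan is to reduce the claim to an explicit identification of the constant $\alpha$ that was left unspecified in Lemma \ref{lemme}. From the proof of Proposition \ref{prop:FA}, we already have the chain
\begin{align*}
\norm{H - P_{U,R,\psi}(H)}^2 &= \min_{\delta U, \delta R, \delta \psi} C_{H;U,R,\psi}(\delta U, \delta R, \delta \psi) \\
&= \min_{\delta \psi} \tilde C(\delta \psi) = \bar C(\overline{\delta \psi}^*),
\end{align*}
where the second equality uses Proposition \ref{Lubich:prop} to carry out the inner minimization over $(\delta U, \delta R)$, and the third invokes Lemma \ref{lemme} at the optimizer $\overline{\delta \psi}^*$ given by \eqref{min1}. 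Hence, once $\alpha$ is identified, plugging $\overline{\delta \psi}^*$ into \eqref{barcost} directly evaluates the squared residual.

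To determine $\alpha$, I would expand \eqref{cost0} and collect all terms independent of $\delta \psi$. Writing $\tilde H = H - \delta \psi$ and using linearity, cyclicity of the trace, and the symmetry of both $H$ and $\delta \psi$, the $\delta \psi$-independent part of $\tilde C$ is obtained simply by setting $\delta \psi = 0$, yielding
\[
\tr(H^2) - 2\tr(UU^T H^2) + \tr(UU^T H UU^T H).
\]
On the other hand, the expression \eqref{barcost} hides a constant $\bar h^T \bar h$ inside the quadratic $(\bar h - \overline{\delta \psi})^T(\bar h - \overline{\delta \psi})$, while every remaining term is genuinely linear or quadratic in $\overline{\delta \psi}$ (hence vanishes at $\overline{\delta \psi} = 0$). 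Matching the two expressions for the constant part of $\tilde C$ forces
\[
\alpha = \tr(H^2) - 2\tr(UU^T H^2) + \tr(UU^T H UU^T H) - \bar h^T \bar h,
\]
as claimed.

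The step requiring care is verifying that no $\delta \psi$-independent contributions arise from the linear or quadratic parts of \eqref{barcost}; this is ensured by inspecting the terms $\bar h_U^T \overline{\delta \psi}$, $\overline{\delta \psi}^T \bar D \overline{\delta \psi}$, $\Lambda^T \overline{\delta \psi}$ and $\overline{\delta \psi}^T \Upsilon \Upsilon^T \overline{\delta \psi}$ displayed in Appendix \ref{AppendixA}, all of which are homogeneous of positive degree in $\overline{\delta \psi}$. The matching of the linear and quadratic parts is already proved during Lemma \ref{lemme}, so only the bookkeeping of the constant is new. Finally, the Pythagorean identity $\norm{H}^2 = \norm{H - P(H)}^2 + \norm{P(H)}^2$ is immediate from the orthogonality $H - P(H) \perp P(H)$ built into the projection, and allows converting the absolute residual into a relative approximation quality measure suitable for online monitoring.
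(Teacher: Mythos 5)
Your proposal is correct and follows essentially the same route as the paper: the constant $\alpha$ is identified by evaluating the cost at $\delta\psi=0$, where \eqref{cost0} gives $\tr(H^2)-2\tr(UU^TH^2)+\tr(UU^THUU^TH)$ while \eqref{barcost} gives $\alpha+\bar h^T\bar h$, and matching the two yields the claimed formula. Your additional bookkeeping (checking that the remaining terms of \eqref{barcost} are homogeneous of positive degree, and the chain reducing $\norm{H-P_{U,R,\psi}(H)}^2$ to $\bar C(\overline{\delta\psi}^*)$) only makes explicit what the paper's terser proof leaves implicit.
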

\begin{proof}To   compute the constant $\alpha$ in   \eqref{barcost},  we write $\tilde C(0)=\alpha+\bar h^T\bar h$. This is also equal to 
 \eqref{cost0} at $\delta \psi=0$, i.e., letting $\tilde H=H$ in the expression. This yields $\alpha$.  
\end{proof}

\end{document}